\numberwithin{equation}{section}
 \renewcommand{\marginpar}[2][]{}
\newcommand{\CARD}{{\rm CARD}}
\newcommand{\REG}{{\rm REG}}
\newcommand{\GCH}{{\rm GCH}}
\newcommand{\ZFC}{{\rm ZFC}}
\renewcommand{\P}{{\mathbb P}}
\newcommand{\Q}{{\mathbb Q}}
\newcommand{\F}{{\mathbb F}}
\newcommand{\Add}{\mathop{\rm Add}}
\newcommand{\forces}{\Vdash}
\newcommand{\restrict}{\upharpoonright}
\newcommand{\<}{\langle}
\renewcommand{\>}{\rangle}
\newcommand{\elesub}{\prec}
\newcommand{\dom}{\mathop{\rm dom}}
\newcommand{\ran}{\mathop{\rm ran}}
\newcommand{\tail}{\text{tail}}
\newcommand{\cf}{\mathop{\rm cf}}
\newcommand{\crit}{\mathop{\rm crit}}
\renewcommand{\and}{\mathop{\&}}
\newcommand{\Mos}{\pi}
\newcommand{\invMos}{k}
\newtheorem{theorem}{Theorem}
\newtheorem{lemma}{Lemma}
\newtheorem{corollary}[theorem]{Corollary}
\newtheorem{claim}{Claim}
\newtheorem*{theorem1}{Theorem 1}
\theoremstyle{definition}
\newtheorem{question}{Question}
\subjclass[2000]{03E35, 03E55}
\date{\today}
\begin{document}

\title{On supercompactness and the continuum function}

\author[Brent Cody]{Brent Cody$^1$}
\thanks{The first author would like to thank the Fields Institute as well as the University of Prince Edward Island for their support while this work was being carried out.}
\address[Brent Cody]{ 
The Fields Institute for Research in Mathematical Sciences\\
222 College Street\\
Toronto, Ontario M5S 2N2, Canada\\
+1-647-983-1766\\
{\tt brentcody@gmail.com}\\
{\tt http://www.fields.utoronto.ca/$\sim$bcody/}
} 
\thanks{$^1$Current address: The University of Prince Edward Island, 550 University Avenue, Charlottetown
Prince Edward Island, Canada C1A 4P3}

\author[Menachem Magidor]{Menachem Magidor}
\address[Menachem Magidor]{ 
Einstein Institute of Mathematics\\
The Hebrew University of Jerusalem\\
Jerusalem, 91904, Israel\\
+972-2-65-84143\\
{\tt mensara@savion.huji.ac.il}\\
} 

\begin{abstract}

Given a cardinal $\kappa$ that is $\lambda$-supercompact for some regular cardinal $\lambda\geq\kappa$ and assuming $\GCH$, we show that one can force the continuum function to agree with any function $F:[\kappa,\lambda]\cap\REG\to\CARD$ satisfying $\forall\alpha,\beta\in\dom(F)$ $\alpha<\cf(F(\alpha))$ and $\alpha<\beta$ $\implies$ $F(\alpha)\leq F(\beta)$, while preserving the $\lambda$-supercompactness of $\kappa$ from a hypothesis that is of the weakest possible consistency strength, namely, from the hypothesis that there is an elementary embedding $j:V\to M$ with critical point $\kappa$ such that $M^\lambda\subseteq M$ and $j(\kappa)>F(\lambda)$. Our argument extends Woodin's technique of surgically modifying a generic filter to a new case: Woodin's key lemma applies when modifications are done on the range of $j$, whereas our argument uses a new key lemma to handle modifications done off of the range of $j$ on the ghost coordinates. This work answers a question of Friedman and Honzik \cite{FriedmanHonzik:SupercompactnessAndFailuresOfGCH}. We also discuss several related open questions.

\smallskip
\noindent \textbf{Keywords.} supercompact cardinal, continuum function, forcing, large cardinal

\end{abstract}

\maketitle

\section{Introduction}\label{sectionintroduction}

The behavior of the continuum function $\gamma\mapsto 2^\gamma$ on the regular cardinals was shown, by Easton, to be highly undetermined by the axioms of $\ZFC$. Easton proved \cite{Easton:PowersOfRegularCardinals} that if $F$ is any function from the regular cardinals to the cardinals satisfying $\alpha<\cf(F(\alpha))$ and $\alpha<\beta$ $\implies$ $F(\alpha)\leq F(\beta)$, then there is a cofinality-preserving forcing extension in which $2^\gamma=F(\gamma)$ for every regular cardinal $\gamma$. Large cardinal axioms impose additional restrictions on the continuum function on the regular cardinals. For example, if $\kappa$ is a supercompact cardinal and $\GCH$ holds below $\kappa$, then $\GCH$ holds everywhere. It therefore seems natural to address the question: What functions can be forced to coincide with the continuum function on the regular cardinals while preserving large cardinals? From what hypotheses? In particular, let us consider the following question.
\begin{question} \label{questionsceaston}
Given a $\lambda$-supercompact cardinal $\kappa$ where $\lambda\geq\kappa$ is a cardinal, and assuming $\GCH$, what functions $F$ from the regular cardinals to the cardinals can be forced to equal the continuum function on the interval $[\kappa,\lambda]$ while preserving the $\lambda$-supercompactness of $\kappa$ and preserving cardinals? From what hypotheses? 

\end{question}

Menas \cite{Menas:ConsistencyResultsConcerningSupercompactness} proved that, assuming $\GCH$, one can force the continuum function to agree at every regular cardinal with any \emph{locally definable}\footnote{A function $F$ is \emph{locally definable} if there is a sentence $\psi$, true in $V$, and a formula $\varphi(x,y)$ such that for all cardinals $\gamma$, if $H_\gamma\models \psi$, then $F$ has a closure point at $\gamma$ and for all $\alpha,\beta<\gamma$, we have $F(\alpha)=\beta\leftrightarrow H_\gamma\models \varphi(\alpha,\beta)$.} function $F$ satisfying the requirements of Easton's theorem, while preserving all cofinalities and preserving the supercompactness of a cardinal $\kappa$. If $\kappa$ is a Laver-indestructible supercompact cardinal in a model $V$, as in \cite{Laver:MakingSupercompactnessIndestructible}, it easily follows that one can force over this model to achieve any reasonable behavior of the continuum function at and above $\kappa$, while preserving the supercompactness of $\kappa$. In particular, starting with a Laver-indestructible supercompact cardinal, one can obtain a model with a measurable cardinal at which $\GCH$ fails. However, one can also obtain a model with a measurable cardinal at which $\GCH$ fails from a much weaker large cardinal assumption. Woodin proved that the existence of a measurable cardinal at which $\GCH$ fails is equiconsistent with the existence of an elementary embedding $j:V\to M$ with critical point $\kappa$ such that $M^\kappa\subseteq M$ and $j(\kappa)>\kappa^{++}$ (see \cite[Theorem 25.1]{Cummings:Handbook}). Woodin's argument illustrates that under certain conditions, one may perform a type of surgical modification on a generic filter $g$ to obtain $g^*$ in order to meet the lifting criterion, $j"G\subseteq g^*$, and such that $g^*$ remains generic. In Woodin's proof, the modifications made to $g$ in order to obtain $g^*$ only occur on the range of $j$, and his key lemma shows that such changes are relatively mild in the sense that for a given condition $p\in g$, the set over which modifications are made to obtain $p^*\in g^*$ has size at most $\kappa$. Hamkins showed \cite{Hamkins:TallCardinals} that Woodin's method could be applied to obtain an indestructibility theorem for tall cardinals.

The first author extended Woodin's surgery method to the case of partially supercompact cardinals in \cite{Cody:TheFailureOfGCHAtADegreeOfSupercompactness}. It is shown in \cite{Cody:TheFailureOfGCHAtADegreeOfSupercompactness} that the existence of a $\lambda$-supercompact cardinal $\kappa$ such that $2^\lambda\geq\lambda^{++}$ is equiconsistent with the following hypothesis.
\begin{quote}$(*)$
There is an elementary embedding $j:V\to M$ with critical point $\kappa$ such that $M^\lambda\subseteq M$ and $j(\kappa)>\lambda^{++}$. 
\end{quote} 
The method used in \cite{Cody:TheFailureOfGCHAtADegreeOfSupercompactness} is to, after a suitable preparatory iteration, blow up the size of the powerset of $\kappa$ using Cohen forcing, in order to achieve $2^\kappa=\lambda^{++}$ and then use Woodin's method of surgery to lift the elementary embedding. Thus, one obtains a model in which $\kappa$ is $\lambda$-supercompact and $\GCH$ fails at $\lambda$, \emph{because} $2^\kappa=\lambda^{++}$. Answering a question posed in \cite{Cody:TheFailureOfGCHAtADegreeOfSupercompactness}, Friedman and Honzik \cite{FriedmanHonzik:SupercompactnessAndFailuresOfGCH} used a variant of Sacks forcing for uncountable cardinals to show, from the hypothesis $(*)$, one can obtain a forcing extension in which $\kappa$ is $\lambda$-supercompact, $\GCH$ holds on $[\kappa,\lambda)$, and $2^\lambda\geq\lambda^{++}$. The methods of both \cite{Cody:TheFailureOfGCHAtADegreeOfSupercompactness} and \cite{FriedmanHonzik:SupercompactnessAndFailuresOfGCH} leave open the following question, which appears in \cite{FriedmanHonzik:SupercompactnessAndFailuresOfGCH}. Assuming $\GCH$ and $(*)$, where $\kappa<\gamma<\lambda$ are regular cardinals, is there a cofinality-preserving forcing extension in which $\kappa$ remains $\lambda$-supercompact, $\GCH$ holds on the interval $[\kappa,\gamma)$, and $2^\gamma=\lambda^{++}$? In this article we answer this question, and indeed, provide a full answer to Question \ref{questionsceaston}, by proving the following theorem.

\begin{theorem} \label{maintheorem}
Suppose $\GCH$ holds, $\kappa<\lambda$ are regular cardinals, and $F:[\kappa,\lambda]\cap\REG\to\CARD$ is a function such that $\forall\alpha,\beta\in\dom(F)$ $\alpha<\cf(F(\alpha))$ and $\alpha<\beta$ $\implies$ $F(\alpha)\leq F(\beta)$. If there is an elementary embedding $j:V\to M$ with critical point $\kappa$ such that $M^\lambda\subseteq M$ and $j(\kappa)>F(\lambda)$, then there is a cofinality-preserving forcing extension in which $\kappa$ remains $\lambda$-supercompact and $2^\gamma=F(\gamma)$ for every regular cardinal $\gamma\in [\kappa,\lambda]$.
\end{theorem}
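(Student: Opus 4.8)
The plan is to follow the Woodin-style surgery template but with a new key lemma handling the ghost coordinates. I would first set up a preparatory forcing $\P_{\kappa}$ that forces $\GCH$ to fail exactly as prescribed by $F$ below $\kappa$ — actually, since $F$ is only required on $[\kappa,\lambda]$, the main work is at and above $\kappa$. So the real forcing is a product (with suitable supports) $\Q = \prod_{\gamma\in[\kappa,\lambda]\cap\REG}\Add(\gamma,F(\gamma))$, or rather an Easton-support iteration/product arranged so that at each regular $\gamma\in[\kappa,\lambda]$ we add $F(\gamma)$ many subsets of $\gamma$. Under $\GCH$ in the ground model this forcing is $\kappa$-directed closed below the first nontrivial stage, the stage-$\kappa$ part $\Add(\kappa,F(\kappa))$ is $\kappa^+$-cc and $<\kappa$-closed, and the tail is highly closed; a standard Easton-style analysis shows $\Q$ preserves cofinalities and forces $2^\gamma=F(\gamma)$ for every regular $\gamma\in[\kappa,\lambda]$. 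The nontrivial claim is that $\kappa$ remains $\lambda$-supercompact.

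To preserve $\lambda$-supercompactness I would start from the hypothesized embedding $j:V\to M$ with $\crit(j)=\kappa$, $M^\lambda\subseteq M$, and $j(\kappa)>F(\lambda)$, and aim to lift it to $j:V[G]\to M[j(G)]$ where $G$ is $\Q$-generic over $V$. Factor $j(\Q)$ in $M$ as $\Q \ast \dot{\Q}_{\mathrm{tail}}$, where the tail forcing is the part of $j(\Q)$ with stages in the interval $(\lambda, j(\kappa))$ (and above); by closure of $M$ and $\GCH$-style counting this tail is sufficiently closed that, working in $V[G]$, one can build an $M[G]$-generic filter $G_{\mathrm{tail}}$ for it by a diagonalization/master-condition argument of the usual kind (the number of dense sets to meet is $(\lambda^+)$-ish and the tail is $\lambda^+$-closed in $M[G]$, using $M^\lambda\subseteq M$ and that $j$ is continuous at appropriate points). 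This lifts $j$ over the part of $j(\Q)$ below $\lambda$ trivially (since $\Q$ restricted to stages $<\lambda$ is fixed) — but the genuinely hard coordinate is stage $\lambda$ itself, and more generally stage $j(\kappa)$... no: the critical obstruction is lifting through the stage-$\kappa$ Cohen forcing $\Add(\kappa,F(\kappa))$, whose image $j(\Add(\kappa,F(\kappa))) = \Add(j(\kappa), j(F(\kappa)))^M$ must receive a generic $g^*$ with $j"g \subseteq g^*$, where $g = G\restriction\kappa$.

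Here is where surgery enters. A naive master condition does not exist because $j"g$ need not be directed in the right way and $\Add(j(\kappa),j(F(\kappa)))^M$ is only $<\!j(\kappa)$-closed in $M$ while $|j"g|$ can be as large as $F(\lambda)^+ > \kappa$. Instead, pick any $M[G]$-generic $h$ for $\Add(j(\kappa),j(F(\kappa)))^M$ (built by diagonalization as above, which is possible since this forcing, though not in $V[G]$-closed, has the right closure in $M[G]$ and $M[G]$ has few enough dense sets from the outside), and then \emph{surgically modify} $h$ to $h^* := h$ patched to agree with $j"g$. Woodin's key lemma covers the case where the patch-set, for each condition $p\in g$, lies on $\ran(j)$ and has size $\le\kappa$, so genericity is preserved. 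In our situation the domain $j(F(\kappa)) = F(\lambda)$ (or larger) is much bigger than $\ran(j)$ restricted to $F(\kappa)$, so the coordinates being modified — the ``ghost coordinates'' $\xi\in j(F(\kappa))\setminus j"F(\kappa)$ that nonetheless get touched because $j$ of a condition has domain a bounded subset of $F(\kappa)\times\kappa$ whose image sits cofinally — fall off the range of $j$. The main obstacle, and the technical heart of the paper, will be proving the new key lemma: that even when the surgery is performed off the range of $j$ on these ghost coordinates, for each $p\in g$ the set of coordinates where $h^*$ differs from $h$ has size at most $\lambda$ (hence can be absorbed), so that $h^*$ meets every dense set of $M[G]$; I would prove this by a counting/automorphism argument, showing that the family of possible patches is small because each $p$ has support of size $<\kappa$, that $j\restriction \kappa^{<\kappa}$ is essentially captured inside $M$ by $M^\lambda\subseteq M$, and that an automorphism of $\Add(j(\kappa),j(F(\kappa)))^M$ moving the ghost coordinates witnesses that no fixed dense set can be avoided. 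Once the new key lemma gives genericity of $h^*$, the embedding lifts to $j:V[G]\to M[G][h^*][G_{\mathrm{tail}}]$, one checks $j"\lambda$ is covered by a condition (standard, using $M^\lambda\subseteq M$ and $2^\lambda=\lambda^+$ in the extension for $\lambda$ itself when $F(\lambda)=\lambda^+$, or a master-condition argument at $\lambda$ when $F(\lambda)$ is larger), and the supercompactness measure derived from the lifted $j$ witnesses $\lambda$-supercompactness of $\kappa$ in $V[G]$.
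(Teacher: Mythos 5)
Your outline reproduces the general surgery template, but it omits and mis-states the parts of the argument that are actually specific to this theorem, and the most serious omission is fatal. You discard the preparatory iteration below $\kappa$ (``the main work is at and above $\kappa$'') and in the end force only with the Easton product $\Q^F_{[\kappa,\lambda]}$, building all auxiliary generics by diagonalization. That cannot work whenever $F(\gamma)>\gamma^+$ for some regular $\gamma\in[\kappa,\lambda]$: your forcing is ${<}\kappa$-closed, so $\GCH$ still holds below $\kappa$ in your final model $V^*$; if $j^*:V^*\to N$ witnessed $\lambda$-supercompactness there, then $P(\gamma)^{V^*}\subseteq N$ because $N^\lambda\subseteq N$, while by elementarity $N\models$ ``$\GCH$ holds below $j^*(\kappa)$'', so $N\models 2^\gamma=\gamma^+$ and hence $(2^\gamma)^{V^*}\leq\gamma^+$, contradicting $2^\gamma=F(\gamma)>\gamma^+$. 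The failure of $\GCH$ on $[\kappa,\lambda]$ must be anticipated below $\kappa$, and since $F$ is arbitrary (not locally definable, so Menas' trick is unavailable), the paper first forces with Woodin's fast-function poset $\F_\kappa$ to add $f$ with $j(f)(\kappa)=F$ (Lemma \ref{lemmafffeaston}) and then runs the Easton-support iteration $\P_\kappa$ guided by $f$, so that $j(\P_\kappa)\cong\P_\kappa*\dot{\Q}^F_{[\kappa,\lambda]}*\dot{\P}_{\tail}$ and the stage-$\kappa$ generic $H$ is absorbed into $j(G)$. Your proposal contains no mechanism for either step.

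Even granting the preparation, three further points are wrong or unjustified. First, you localize the surgery at the single coordinate $\Add(\kappa,F(\kappa))$ and say the rest of $j(\Q)$ lifts ``trivially''; in fact $j$ moves the whole block $[\kappa,\lambda]$ to $[j(\kappa),j(\lambda)]$, no coordinate of $\Q^F_{[\kappa,\lambda]}$ recurs in $j(\Q^F_{[\kappa,\lambda]})$, master conditions are unavailable at the coordinates in $(\kappa,\lambda]$ as well (e.g.\ $\bigcup j"H_\gamma$ is too large to be a condition), and the paper performs surgery on all of $j(\Q^F_{[\kappa,\lambda]})$ at once with $Q=\bigcup j"H$. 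Second, your ``new key lemma'' is a cardinality bound on the set of flipped coordinates, but size is not the issue: for a single $p$ the modification set $\dom(p)\cap\dom(Q)$ can have external size well above $\lambda$, and what genericity needs is that the relevant difference set lie \emph{in} $M[j(G)]$ (note $j"H\notin M[j(G)]$) so that a bit-flipping automorphism can be applied to a maximal antichain inside $M[j(G)]$. The paper's Lemma \ref{newkeylemmaeaston} instead bounds by $\lambda$ the number of traces $\dom(j(q))\cap B$ for $q\in\Q^F_{[\kappa,\lambda]}$ (using $\GCH$ and the representation of $M[j(G)]$ by functions on $P_\kappa\lambda\times\kappa$), so the difference set $\Delta$ decomposes into $\lambda$ pieces each in $M[j(G)]$ and hence lies in $M[j(G)]$ by $\lambda$-closure. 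Third, your $M[j(G)]$-generic for $j(\Q^F_{[\kappa,\lambda]})$ cannot be built by a straight diagonalization: externally there are roughly $F(\lambda)^+$ many dense sets in $M[j(G)]$ to meet but only ${\leq}\lambda$-closure available. The paper gets this generic by forcing with the ultrapower image $j_0(\Q^F_{[\kappa,\lambda]})$ over $V[G][H]$ (proving it is ${\leq}\lambda$-distributive and $\lambda^{++}$-c.c., hence cofinality-preserving and harmless to the continuum function up to $\lambda$) and transferring $g_0$ along the factor map $k$; accordingly the final model is $V[G][H][g_0]$, not the plain $\Q$-extension your sketch ends with.
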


The stage $\kappa$ forcing $\Q$ in our proof will be an Easton-support product forcing for adding $F(\gamma)$-subsets to $\gamma$ for each regular cardinal $\gamma\in[\kappa,\lambda]$. Since conditions $p$ in this forcing will have size greater than the critical point of the embedding $j$ under consideration, it follows that $j(p)\neq j"p$ and hence $j(p)$ will have ghost-coordinates. Part of the modification we will perform on a certain generic filter $g$ to obtain $g^*$ with $j"H\subseteq g^*$ will occur off of the range of $j$ on these ghost coordinates. Hence, Woodin's key lemma does not apply. We will formulate and prove a new key lemma (Lemma \ref{newkeylemmaeaston} below) that allows much of the rest of the argument to be carried out as before. Essentially, our new key lemma shows that the set over which modifications are made to a condition $p\in g$ to obtain $p^*\in g^*$ can be broken up into $\lambda$ pieces, each of which is in the relevant model.

The hypothesis in Theorem \ref{maintheorem} is of optimal consistency strength for the simple reason that if $j:V\to M$ witnesses that $\kappa$ is $\lambda$-supercompact, then
$$2^\lambda\leq (2^\lambda)^M < j(\kappa).$$

In Section \ref{sectionopenquestions} we discuss an easy corollary of Theorem \ref{maintheorem} and an open question both addressing the case when $\lambda$ is a singular cardinal.

\section{Preliminaries}\label{sectionpreliminaries}

\subsection{Background material}

We assume familiarity with the large cardinal notions of measurable and supercompact cardinals as well as with the characterization of these notions in terms of the existence of nontrivial elementary embeddings from the universe $V$ into transitive inner models $M\subseteq V$. Some familiarity with the techniques of lifting large cardinal embeddings to forcing extensions  is also assumed (see \cite[Sections 8 and 9]{Cummings:Handbook} and \cite[Chapter 21]{Jech:Book}).

\subsection{Supercompactness with tallness}

We say that a cardinal $\kappa$ is \emph{$\theta$-tall} if $\theta>\kappa$ is an ordinal and there is an elementary embedding $j:V\to M$ with critical point $\kappa$ such that $j(\kappa)>\theta$ and $M^\kappa\subseteq M$. We say that $\kappa$ is \emph{$\lambda$-super\-compact with tallness $\theta$} if $\kappa\leq\lambda$ are cardinals, $\theta>\lambda$ is an ordinal, and there is a $j:V\to M$ with critical point $\kappa$ such that $M^\lambda\subseteq M$ and $j(\kappa)>\theta$. Such cardinals have been studied by Hamkins \cite{Hamkins:TallCardinals}, Cody \cite{Cody:TheFailureOfGCHAtADegreeOfSupercompactness}, Friedman-Honzik \cite{FriedmanHonzik:SupercompactnessAndFailuresOfGCH}, and others.

The next lemma appears in \cite{Cody:TheFailureOfGCHAtADegreeOfSupercompactness}, and is easy to verify by factoring an elementary embedding through the ultrapower by an extender.
\begin{lemma} \label{lemmaextender1}
If $\kappa$ is $\lambda$-supercompact with tallness $\theta$ then there is an embedding $j:V\to M$ witnessing this such that 
$$M=\{j(h)(j"\lambda,\alpha)\mid h:P_\kappa\lambda\times\kappa\to V\ \and\ \alpha<\theta^\lambda\ \and\  h\in V\}.$$
\end{lemma}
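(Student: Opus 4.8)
The plan is to prove Lemma \ref{lemmaextender1} by the standard factoring argument: take any embedding $j\colon V\to M$ witnessing that $\kappa$ is $\lambda$-supercompact with tallness $\theta$, and show that the ``hull'' $X$ of $\ran(j)$ together with the two parameters $j"\lambda$ and an ordinal below $\theta^\lambda$ already absorbs all of $M$, so that the transitive collapse of this hull gives a new witnessing embedding of the desired form. Concretely, first I would form the extender derived from $j$ using the seed $(j"\lambda, \theta^\lambda)$ — or more precisely, I would let $E$ be the $(\kappa, \theta^\lambda)$-extender together with the supercompactness measure data coded by $j"\lambda$, i.e., work with the ultrapower $\Ult(V,\mathcal{E})$ where $\mathcal{E}$ is the extender with support $P_\kappa\lambda\times\theta^\lambda$ derived from $j$ via $A\in \mathcal{E}_{(s,\alpha)} \iff (j"\lambda,\alpha)\in j(A)$.

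Then I would carry out the following steps. (1) Define the hull $X = \{\, j(h)(j"\lambda,\alpha) : h\colon P_\kappa\lambda\times\kappa\to V,\ \alpha<\theta^\lambda,\ h\in V\,\}$ and check that $X\prec M$ using the fact that $j"\lambda$ and each $\alpha<\theta^\lambda$ are in $M$ and Łoś/elementarity of $j$. (2) Check $\ran(j)\subseteq X$ (take $h$ constant) and $V_\kappa\cup\{\kappa\}\subseteq X$, so the transitive collapse $\pi\colon X\to M'$ has critical point $\geq\kappa$ and $k = \pi\circ j\colon V\to M'$ is elementary with critical point exactly $\kappa$; moreover $M' = \{k(h)(\pi(j"\lambda),\pi(\alpha)) : \ldots\}$, and one identifies $\pi(j"\lambda)$ with $k"\lambda$ and checks $\pi\restriction\theta^\lambda$ is onto some ordinal, giving the displayed form for $k$ and $M'$. (3) Verify the witnessing properties survive: $\crit(k)=\kappa$ is immediate; $k(\kappa)>\theta$ because $\theta<j(\kappa)$ and $\pi$ moves $j(\kappa)$ to something still above $\theta$ (since all ordinals up to $\theta^\lambda\geq\theta$ that are needed are in $X$, or one argues $\theta\in X$ and $\pi\restriction(\theta+1)=\id$); and $(M')^\lambda\subseteq M'$ because a $\lambda$-sequence from $M'$ pulls back under $\pi$ to a $\lambda$-sequence from $X\subseteq M$, which lies in $M$ by $M^\lambda\subseteq M$, and then the seed $j"\lambda$ together with a single function lets one recapture it inside $X$ — this is exactly the usual argument that a hull closed under the relevant seeds inherits closure.

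The main obstacle I expect is step (3), specifically verifying $(M')^\lambda\subseteq M'$ and $k(\kappa)>\theta$ simultaneously — one must be careful that the support $\theta^\lambda$ (rather than just $\theta$) is large enough that every relevant ordinal below $\theta$ and every relevant $\lambda$-sequence of ordinals from $M$ can be coded by a single seed parameter $\alpha<\theta^\lambda$. The point of taking $\alpha$ to range up to $\theta^\lambda$ rather than $\theta$ is precisely to have enough seeds: a $\lambda$-sequence of ordinals below $\theta$ (which is what closure under $\lambda$-sequences of elements of $M'$, after collapsing, amounts to) is coded by a single ordinal below $\theta^\lambda$, and applying $j$ to the corresponding function and evaluating at that seed recovers the sequence inside $X$. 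Since the lemma is stated as ``easy to verify,'' I would keep this terse, citing \cite{Cody:TheFailureOfGCHAtADegreeOfSupercompactness} for the details and only indicating the seed $(j"\lambda,\alpha)$ for $\alpha<\theta^\lambda$ and the hull/collapse mechanism.
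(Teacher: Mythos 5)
Your overall plan (derive the extender with seeds $(j"\lambda,\alpha)$ for $\alpha<\theta^\lambda$, form the hull $X=\{j(h)(j"\lambda,\alpha)\mid h\in V,\ \alpha<\theta^\lambda\}$, collapse, and check that the witnessing properties survive) is the same route the paper indicates, and most of your steps (1)--(2) and the tallness computation are routine. The genuine problem is in the one place where the lemma has real content: your justification of $(M')^\lambda\subseteq M'$. You say that a $\lambda$-sequence of seed ordinals is ``coded by a single ordinal below $\theta^\lambda$, and applying $j$ to the corresponding function and evaluating at that seed recovers the sequence inside $X$.'' That mechanism fails. If $G\colon\theta^\lambda\to{}^{\lambda}(\theta^\lambda)$ is a bijection in $V$ and $\beta=G^{-1}(\vec\alpha)$, then what you can evaluate inside $M$ at the seed $\beta$ is $j(G)(\beta)$, and this is \emph{not} $G(\beta)=\vec\alpha$: the identity $j(G)(j(\beta))=j(\vec\alpha)$ is all that elementarity gives, and $\beta$ is in general moved by $j$ (it is typically $\geq\kappa$), so $j(G)(\beta)$ bears no relation to $\vec\alpha$. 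Coding by a single ordinal only commutes with $j$ when the coding is uniformly definable (e.g.\ G\"odel pairing of \emph{finite} tuples) or when the code lies below the critical point; neither applies to $\lambda$-sequences of ordinals below $\theta^\lambda$.

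This is not a cosmetic slip: after the standard reductions, closure of the collapsed model under $\lambda$-sequences is \emph{equivalent} to showing that every $\lambda$-sequence (equivalently, every $\lambda$-sized set) of ordinals below $\theta^\lambda$ lying in $V$ belongs to the hull $X$, i.e.\ is definable in $M$ from $j"\lambda$, a single ordinal $<\theta^\lambda$, and parameters in $\ran(j)$. The pieces you do get correctly ($j"\lambda\in X$, hence $j\restriction\lambda\in X$, hence $P(\lambda)^V\subseteq X$ and $\lambda$-sequences of ordinals $<\lambda^+$ lie in $X$; likewise $\langle j(h_\xi)\mid\xi<\lambda\rangle\in X$ from $j(\vec h)$ and $j"\lambda$) do not reach sequences whose entries lie in $[\lambda^+,\theta^\lambda)$, and your proposal supplies no mechanism for those. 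So as written the key step would not compile into a proof; it needs the actual argument from \cite{Cody:TheFailureOfGCHAtADegreeOfSupercompactness} (which the present paper simply cites), not the $j(G)(\beta)$ decoding. A smaller point to watch: when $\theta^\lambda=\theta$ (the case relevant to the application, where $\cf(F(\lambda))>\lambda$ and $\GCH$ holds) the inequality $k(\kappa)>\theta$ is not automatic from ``$\theta\subseteq X$'' alone and needs its own justification, since $\pi(j(\kappa))=\ot(X\cap j(\kappa))$ could a priori equal $\theta$.
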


\subsection{Easton's product forcing}
Let us quickly review Easton's product forcing and fix some notation. Suppose $\kappa$ and $\lambda$ are regular cardinals and $F:[\kappa,\lambda]\cap\REG\to\CARD$ is a function satisfying Easton's requirements; that is, for all regular cardinals $\alpha,\beta\in[\kappa,\lambda]$ one has
\begin{align}
\alpha &<\cf(F(\alpha)) \textrm{ and} \tag{E1}\\
\alpha &<\beta\implies F(\alpha)\leq F(\beta).\tag{E2}
\end{align}
Let $\Q^F_{[\kappa,\lambda]}$ denote the Easton support product of Cohen forcing to achieve $2^\gamma=F(\gamma)$ for every regular $\gamma\in [\kappa,\lambda]$, by forcing over a model of $\GCH$. We can regard conditions $p\in \Q^{F}_{[\kappa,\lambda]}$ as functions satisfying the following.

\begin{itemize}
\item Every element in $\dom(p)$ is of the form $(\gamma,\alpha,\beta)$ where $\gamma\in [\kappa,\lambda]$ is a regular cardinal, $\alpha<\gamma$, and $\beta<F(\gamma)$.
\item (Easton support) For each regular cardinal $\gamma\in[\kappa,\lambda]$ we have 
$$|\{(\delta,\alpha,\beta)\in\dom(p)\mid\delta\leq\gamma\}|<\gamma.$$
\item $\ran(p)\subseteq\{0,1\}$.
\end{itemize}

Let $\dom(\Q^F_{[\kappa,\lambda]}):=\bigcup\{\dom(p)\mid p\in \Q^F_{[\kappa,\lambda]}\}$ and notice that 
$$\dom(\Q^F_{[\kappa,\lambda]})=\{(\delta,\alpha,\beta)\mid\textrm{$\delta\in[\kappa,\lambda]\cap\REG$, $\alpha<\delta$, and $\beta<F(\delta)$}\}$$
By \cite{Easton:PowersOfRegularCardinals} we obtain the following.
\begin{lemma}\label{lemmaeaston}
Assuming $\GCH$, forcing with the poset $\Q^F_{[\kappa,\lambda]}$ preserves all cofinalities and achieves $2^\gamma=F(\gamma)$ for every regular cardinal $\gamma\in[\kappa,\lambda]$ while preserving $\GCH$ otherwise.
\end{lemma}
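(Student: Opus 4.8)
The plan is to run the standard two‑step factoring argument underlying Easton's theorem \cite{Easton:PowersOfRegularCardinals}; the only point requiring care is that the bookkeeping goes through for this particular Easton product. For each regular cardinal $\gamma\in[\kappa,\lambda]$ one has the factorization
\[
\Q^F_{[\kappa,\lambda]}\cong\Q_{\le\gamma}\times\Q_{>\gamma},
\]
where $\Q_{\le\gamma}$ collects the conditions supported on coordinates $(\delta,\alpha,\beta)$ with $\delta\le\gamma$ and $\Q_{>\gamma}$ those supported on coordinates with $\delta>\gamma$; the map $p\mapsto(p\restrict\{\delta\le\gamma\},\,p\restrict\{\delta>\gamma\})$ is the desired isomorphism.

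First I would record the chain condition and closure properties of the two pieces. The tail $\Q_{>\gamma}$ is $\gamma^+$-closed, i.e.\ every descending sequence of length $\le\gamma$ has a lower bound: the pointwise union of such a sequence is again a condition, because at each regular coordinate level $\delta>\gamma$ the Easton-support bound asks the domain below $\delta$ to have size $<\delta$, and here it has size $\le\sup_{\xi<\gamma}|p_\xi\restrict\{\delta'\le\delta\}|<\delta$ by regularity of $\delta$. In particular $\Q_{>\gamma}$ adds no new subset of $\gamma$, and this remains true after first forcing with $\Q_{\le\gamma}$, since $\gamma^+$-closure is here just a combinatorial feature of the domains and hence absolute. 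On the other hand $\GCH$ yields $2^{<\gamma}=\gamma$ for every regular $\gamma$, so the usual $\Delta$-system argument applies to $\Q_{\le\gamma}$ — whose conditions have domains of size $<\gamma$ by Easton support at $\gamma$ — and gives the $\gamma^+$-chain condition; counting conditions and using $\GCH$ together with $\gamma<\cf(F(\gamma))$ to get $F(\gamma)^{<\gamma}=F(\gamma)$ gives $|\Q_{\le\gamma}|=F(\gamma)$, and the same computation gives $|\Q^F_{[\kappa,\lambda]}|=F(\lambda)$. Finally, since every coordinate lies at a regular cardinal $\ge\kappa$, the full poset $\Q^F_{[\kappa,\lambda]}$ is $\kappa$-closed.

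These observations give the three assertions in the usual way. For cofinality preservation, fix a $V$-regular cardinal $\mu$: if $\mu\le\kappa$ this is immediate from $\kappa$-closure of the full forcing (which also keeps $\GCH$ intact below $\kappa$); if $\mu>F(\lambda)$ it follows from the $F(\lambda)^+$-chain condition; and for regular $\mu\in(\kappa,F(\lambda)]$ one argues by induction on the regular cardinals, using that the tail past $\mu$ is sufficiently closed and the initial segment up to $\mu$ is sufficiently chain-condition bounded so that no cofinality is moved. Thus all cofinalities, and hence all cardinals, are preserved. For the continuum function at a regular $\gamma\in[\kappa,\lambda]$: forcing with $\Q_{>\gamma}$ over $V[G_{\le\gamma}]$ adds no subset of $\gamma$, so $(2^\gamma)^{V[G]}=(2^\gamma)^{V[G_{\le\gamma}]}$; a count of nice names against the $\gamma^+$-c.c.\ poset $\Q_{\le\gamma}$ of size $F(\gamma)$ gives $2^\gamma\le F(\gamma)^\gamma=F(\gamma)$ (again using $\GCH$ and $\gamma<\cf(F(\gamma))$), while the $F(\gamma)$ mutually generic Cohen subsets of $\gamma$ added on the coordinates $(\gamma,\cdot,\cdot)$ witness $2^\gamma\ge F(\gamma)$. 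Finally, $\GCH$ is preserved wherever $F$ does not force its failure — below $\kappa$ by $\kappa$-closure, above $F(\lambda)$ by $F(\lambda)^+$-c.c.\ plus a nice-name count — and at the cardinals in between the continuum function takes the pointwise-least value consistent with $\GCH$ in $V$ and the prescribed values of $F$, exactly as in Easton's original construction.

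The only step with genuine content is the cofinality-preservation argument: keeping simultaneous track, from above and from below, of the chain conditions and the closure degrees of the initial segments $\Q_{<\mu}$, so that the induction over the regular cardinals closes without collapsing anything. Since this is precisely Easton's argument, for the details I would refer to \cite{Easton:PowersOfRegularCardinals}, to \cite[Chapter 21]{Jech:Book}, and to \cite[Sections 8 and 9]{Cummings:Handbook} rather than reproduce them here.
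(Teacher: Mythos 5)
The overall architecture you use -- factor $\Q^F_{[\kappa,\lambda]}\cong\Q_{\le\gamma}\times\Q_{>\gamma}$, get $\gamma^+$-c.c.\ for the lower part from a $\Delta$-system argument under $\GCH$, $\le\gamma$-closure for the tail from the Easton-support computation, count nice names using $F(\gamma)^\gamma=F(\gamma)$ (which needs exactly $\gamma<\cf(F(\gamma))$ and $\GCH$), and run the two-sided induction for cofinality preservation -- is the standard one, and it is precisely what the paper defers to by citing Easton and \cite[Theorem 15.18]{Jech:Book}; so in outline you and the paper agree.

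There is, however, one step whose justification as written is wrong. You claim that $\Q_{>\gamma}$ adds no new subsets of $\gamma$ over $V[G_{\le\gamma}]$ ``since $\gamma^+$-closure is here just a combinatorial feature of the domains and hence absolute.'' Closure is \emph{not} absolute to the extension by $\Q_{\le\gamma}$: the poset $\Q_{>\gamma}$ is a set of conditions lying in $V$, and in $V[G_{\le\gamma}]$ there are new descending $\gamma$-sequences of its elements whose union is not in $V$ and hence has no lower bound in the poset. Concretely, let $s:\gamma\to 2$ be the Cohen function added at coordinate $\gamma$ (all of whose proper initial segments lie in $V$), fix a regular $\delta_0>\gamma$ in $[\kappa,\lambda]$, and set $p_\xi(\delta_0,\eta,0)=s(\eta)$ for $\eta<\xi$; each $p_\xi\in\Q_{>\gamma}$, the sequence $\langle p_\xi\mid\xi<\gamma\rangle$ is descending and lies in $V[G_{\le\gamma}]$, but any lower bound would put $s$ into $V$. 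What is true, and what the argument actually needs, is Easton's Lemma: a forcing that is $\gamma^+$-closed in $V$ remains $\gamma^+$-\emph{distributive} after any $\gamma^+$-c.c.\ forcing, which gives exactly ``no new subsets of $\gamma$'' and hence $(2^\gamma)^{V[G]}=(2^\gamma)^{V[G_{\le\gamma}]}$. (The paper itself invokes Easton's Lemma by name in the proof of Claim \ref{claimmild}.) The same repair is what makes your cofinality-preservation induction for regular $\mu\in(\kappa,F(\lambda)]$ go through: a putative cofinal $f:\nu\to\mu$ with $\nu<\mu$ is pulled into $V[G_{\le\nu}]$ by Easton's Lemma applied to the factorization at $\nu$, and is then bounded by the $\nu^+$-c.c.\ covering argument. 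With the absoluteness claim replaced by Easton's Lemma, the rest of your sketch is correct and matches the cited treatment.
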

One may also see \cite[Theorem 15.18]{Jech:Book} for further details regarding the partial order $\Q^F_{[\kappa,\lambda]}$ and Lemma \ref{lemmaeaston}.

\section{Proof of the main theorem}

The next lemma will allow us to define the forcing iteration we will use to prove Theorem \ref{maintheorem}. Notice that if $\GCH$ holds and $\kappa$, $\lambda$, $F$, and $j$ are as in the statement of Theorem \ref{maintheorem} then $F(\lambda)^\lambda=F(\lambda)$ since $\lambda<\cf(F(\lambda))$. Thus, under the hypothesis of Theorem \ref{maintheorem}, it follows from Lemma \ref{lemmaextender1} that we may assume without loss of generality that $j:V\to M$ is such that 
\begin{align}
M&=\{j(h)(j"\lambda,\alpha)\mid h:P_\kappa\lambda\times\kappa\to V\ \and\ \alpha<F(\lambda)\ \and\  h\in V\} \label{eqnextender}
\end{align}

\begin{lemma}\label{lemmafffeaston}
Assume $\GCH$ and that $\kappa,\lambda,F$, and $j:V\to M$ are as in the hypothesis of Theorem \ref{maintheorem}. Furthermore, assume that $M$ is as in (\ref{eqnextender}). Then there is a cofinality-preserving forcing extension $V[G]$ such that the following hold.
\begin{enumerate}
\item $\GCH$ holds in $V[G]$.
\item There is a partial function $f$ from $\kappa$ to $V_\kappa$ in $V[G]$ such that $j$ lifts to $j:V[G]\to M[j(G)]$ and $j(f)(\kappa)=F$.
\item $M[j(G)]=\{j(h)(j"\lambda,\alpha)\mid h:(P_\kappa\lambda)^V\times\kappa\to V\ \and\ \alpha<F(\lambda)\ \and\  h\in V[G]\}$ \label{extenderpreserved}
\end{enumerate}
\end{lemma}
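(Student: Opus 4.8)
The plan is to build $V[G]$ as the extension by a "Laver-style" preparation iteration $\P$ of length $\kappa$, designed so that the function $f$ added by the iteration anticipates $F$ at the critical point. First I would define $\P$ as an Easton-support (or reverse Easton) iteration that, at each inaccessible stage $\alpha<\kappa$ at which a suitable guessing function $\ell$ (a Laver function, or a partial Menas-style function on $\kappa$) outputs a set of the form $\ell(\alpha)=\langle \bar F\rangle$ coding an Easton function on $[\alpha,\ell_0(\alpha)]$, forces with $\Q^{\bar F}_{[\alpha,\ell_1(\alpha)]}$; at other stages the iteration does nothing (or forces trivially). Standard arguments show $\P$ has size $\kappa$, is $\kappa$-c.c., and that if $\GCH$ holds in $V$ then $\GCH$ holds in $V[G]$ for $G\subseteq\P$ generic — this is item (1). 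I would then let $f$ be the partial function on $\kappa$ that, on the set of stages where nontrivial forcing was done, records the pair $(\ell_1(\alpha),$ the generic Easton function realized$)$, or more precisely records enough to let $j(f)(\kappa)$ be computed; $f$ is an element of $V$ already (or of $V[G]$), mapping a subset of $\kappa$ into $V_\kappa$.

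The heart of the matter is item (2): arranging $j(f)(\kappa)=F$ and lifting $j$. Here I would use that $j:V\to M$ has critical point $\kappa$ and $M^\lambda\subseteq M$ with $j(\kappa)>F(\lambda)$, so that $F\in M$ (as $F$ has a transitive closure of size $\lambda$ and $M$ is closed under $\lambda$-sequences) and $F$ is, in $M$, an Easton function on an interval $[\kappa, F(\lambda)]\subseteq[\kappa, j(\kappa))$. By the standard guessing property of the Laver/Menas function (or by reflecting $F$ down via a suitable measure on $P_\kappa\lambda$ to get the function $\ell$ to guess $F$ correctly at $\kappa$), we may assume $\ell$ is chosen so that in $M$ the iteration $j(\P)$ at stage $\kappa$ forces with $\Q^{F}_{[\kappa,F(\lambda)]}$ — i.e., $j(f)(\kappa)=F$. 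Then $j(\P)$ factors as $\P * \Q^F_{[\kappa,F(\lambda)]} * \P_{\tail}$, where $\P_{\tail}$ is the remainder of $j(\P)$ above stage $\kappa$ (more precisely, strictly above $F(\lambda)$, with the $\Q^F$ piece pulled out). To lift $j$ I build generics for each factor over $M$: $G$ itself works for $\P$; a generic $H$ for $\Q^F_{[\kappa,F(\lambda)]}$ over $V[G]$ will be produced by the surgery method alluded to in the introduction (this is the content of the later main argument, and at this stage I only need that *some* such $M[G]$-generic exists compatible with $j"G$, which one gets because $\P$ is small and $j"G=G$), and a generic for $\P_{\tail}$ over $M[G][H]$ is built by the usual transfer/term-forcing argument using that $\P_{\tail}$ is sufficiently closed in $M[G][H]$ and that $V[G][H]$ has only $F(\lambda)^+$-many dense sets to meet while the relevant models remain closed enough; a master-condition/diagonal-intersection argument handles the lifting. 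One then checks $j"G\subseteq j(G)$ so $j$ lifts to $j:V[G]\to M[j(G)]$, and by construction $j(f)(\kappa)=F$.

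For item (3) I would verify that the extender/seed characterization of $M$ in (\ref{eqnextender}) is preserved under the lift. Since $j$ lifts to $V[G]\to M[j(G)]$ and $M[j(G)]$ is generated (as a class) by $j"V[G]$ together with the seed $(j"\lambda,\alpha)$ for $\alpha<F(\lambda)$ — because $j(G)$ is itself definable from $G$ and these seeds via the iteration, and $j(h)$ for $h\in V[G]$ is computed from a name $\dot h\in V$ applied to $G$ — every element of $M[j(G)]$ has the form $j(h)(j"\lambda,\alpha)$ with $h\in V[G]$, $\alpha<F(\lambda)$. Conversely each such value lies in $M[j(G)]$ by elementarity. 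This gives the displayed equality. The main obstacle I anticipate is item (2): specifically, arranging that the guessing function $\ell$ guesses $F$ correctly at $\kappa$ (which requires care because $F$ may not be an element of the ground model's Laver-function range in the naive way — one must reflect $F$, which has size $\lambda>\kappa$, using a $\lambda$-supercompactness measure rather than a measure on $\kappa$), and then constructing the $M$-generic for the long tail $\P_{\tail}$ of $j(\P)$ while respecting the master condition $j"G$ — the closure $M^\lambda\subseteq M$ is exactly what makes the tail sufficiently distributive in $M[G][H]$ for this to go through, but the bookkeeping (and the interaction with the surgery that produces $H$) is delicate and is really where the novelty of the paper's later key lemma enters.
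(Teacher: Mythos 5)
Your construction does not prove this lemma as stated, for two concrete reasons. First, you have folded the main theorem's Easton forcing into the preparation itself: if your length-$\kappa$ iteration forces with $\Q^{\bar F}_{[\alpha,\ell_1(\alpha)]}$ at unboundedly many inaccessible $\alpha<\kappa$, then $\GCH$ fails at those $\alpha$ in $V[G]$, contradicting conclusion (1) — and conclusion (1) is not cosmetic, since the main proof later forces over $V[G]$ with its own iteration $\P_\kappa*\dot\Q^F_{[\kappa,\lambda]}$ and needs $\GCH$ in $V[G]$ both for the Easton analysis and for the $\lambda^+$-counting of dense sets. Relatedly, your lift of $j$ through $j(\P)$ requires an $M[G]$-generic for the stage-$\kappa$ factor $\Q^{F}_{[\kappa,F(\lambda)]}$, which you defer to ``the surgery method''; but surgery is the content of the main theorem, needs a $V$-generic $H$ for the big Easton product plus the further forcing $g_0$, and cannot be invoked inside this lemma, whose entire purpose is to produce a mild (cofinality- and $\GCH$-preserving) extension over which that later argument is run. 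As set up, your proof is circular and does not deliver items (1) and (2) simultaneously.

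Second, you assume a ground-model guessing function $\ell$ with $j(\ell)(\kappa)=F$ ``by the standard guessing property of the Laver/Menas function.'' From the actual hypothesis — a single embedding with $M^\lambda\subseteq M$ and $j(\kappa)>F(\lambda)$, i.e.\ $\lambda$-supercompactness with tallness $F(\lambda)$ — no such function is available off the shelf; anticipating the specific target $F$ below an embedding whose only extra feature is $j(\kappa)>F(\lambda)$ is exactly the difficulty the lemma exists to remove, and it does so by \emph{forcing} the guessing function into existence rather than assuming it. The paper's route: force with Woodin's fast-function poset $\F_\kappa$ (partial functions $\kappa\to\kappa$ with inaccessible domain and the Easton-type restriction), which preserves cofinalities and $\GCH$; fix a well-ordering $\langle x_\xi\mid\xi<\kappa\rangle$ of $V_\kappa$ and $\alpha<j(\kappa)$ with $F=z_\alpha$ in $j$ of that well-ordering; factor $j(\F_\kappa)$ below a condition $p_0$ that both forces $j(\tilde f)(\kappa)=\alpha$ and pushes the tail above $F(\lambda)$, so $j(\F_\kappa)/p_0\cong\F_\kappa\times\F_{\tail}$ with $\F_{\tail}$ ${\leq}F(\lambda)$-closed in $M$; build an $M$-generic for $\F_{\tail}$ \emph{in $V$} by a $\lambda^+$-length diagonalization against $F(\lambda)$-sized blocks of dense sets given by the extender representation (using $M^\lambda\subseteq M$); and finally set $f(\xi)=x_{\tilde f(\xi)}$, so $j(f)(\kappa)=z_\alpha=F$. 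Your verification of item (3) by evaluating names with $G$ is essentially the right idea and matches the paper, but it rests on a lift you have not legitimately obtained.
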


\begin{proof}

Fix $\vec{x}=\<x_\xi\mid\xi<\kappa\>$, a well-ordering of $V_\kappa$. Then $j(\vec{x})=\<z_\xi\mid\xi<j(\kappa)\>$ is a well ordering of $V_{j(\kappa)}^M$. Since $M^\lambda\subseteq M$ it is clear that $F\in M$ and hence $F\in V_{j(\kappa)}^M$. Fix $\alpha<j(\kappa)$ such that $F=z_\alpha$. Let $\F_\kappa$ be Woodin's poset for adding a partial function from $\kappa$ to $\kappa$, defined as follows. Conditions in $\F_\kappa$ are partial functions $p\subseteq \kappa\times\kappa$ satisfying the following conditions.
\begin{itemize}
\item $\gamma\in\dom(p)$ $\implies$ $\gamma<\kappa$ is an inaccessible cardinal and $p"\gamma\subseteq\gamma$.
\item For every inaccessible cardinal $\mu<\kappa$ one has $|p\restrict\mu|<\mu$.
\end{itemize}
The ordering on $\F_\kappa$ is defined by $p\leq p'$ if and only if $p\supseteq p'$. For a proof that under $\GCH$ the poset $\F_\kappa$ preserves all cofinalities and does not disturb the continuum function, see \cite[Theorem 1.3]{Hamkins:TheLotteryPreparation}. Let $G$ be $V$-generic for $\F_\kappa$ and let $\tilde{f}=\bigcup G$. Clearly $\tilde{f}$ is a partial function from $\kappa$ to $\kappa$. 

Let us show that in $M$ the poset $j(\F_\kappa)$ factors below a condition $p_0$ as $j(\F_\kappa)/p_0\cong \F_\kappa\times \F_{\tail}$ such that $\F_{\tail}$ is ${\leq}F(\lambda)$-closed in $M$. Notice that $\{(\kappa,\alpha)\}$ is a condition in $\F_\kappa$ and that the forcing $j(\F_\kappa)$ can be factored below $\{(\kappa,\alpha)\}$ as $j(\F_\kappa)/\{(\kappa,\alpha)\}\cong \F_\kappa \times \F_{[\gamma_0,j(\kappa))}$ where $\gamma_0$ denotes the least $M$-inaccessible cardinal greater than $\alpha$, and $\F_{[\gamma_0,j(\kappa))}$ is a ${\leq}\gamma_0$-closed poset in $M$. If $\gamma_0\geq F(\lambda)$ then we can take $p_0=\{(\kappa,\alpha)\}$ and $\F_{\tail}=\F_{[\gamma_0,j(\kappa))}$. Assuming $\gamma_0<F(\lambda)$, let $p_0=\{(\kappa,\alpha),(\gamma_0,F(\lambda))\}$ and factor $j(\F_\kappa)$ below the condition $p_0=\{(\kappa,\alpha),(\gamma_0,F(\lambda))\}$ to obtain $$j(\F_\kappa)/\{(\kappa,\alpha),(\gamma_0,F(\lambda))\}\cong\F_\kappa\times \F_{[\gamma_1,j(\kappa))}$$
where $\gamma_1$ is the least $M$-inaccessible cardinal greater than $F(\lambda)$ and $\F_{[\gamma_1,j(\kappa))}$ is ${\leq}\gamma_1$-closed in $M$. In this case we take $\F_{\tail}=\F_{[\gamma_1,j(\kappa))}$.

Now we will show that there is an $M$-generic filter $K$ for $j(\F_\kappa)/p_0\cong\F_\kappa\times\F_{\tail}$ in $V[G]$ satisfying the lifting criterion $j"G\subseteq K$, and hence $j$ lifts to $j:V[G]\to M[j(G)]$ (see \cite[Proposition 9.1]{Cummings:Handbook}) where $j(G)$ is the upward closure of $K$ in $j(\F_\kappa)$. This will suffice for (2) because $p_0\in j(G)$ ensures that $j(\tilde{f})(\kappa)=\alpha$, and then working in $V[G]$, one can define a function $f$ with $\dom(f)\subseteq\kappa$ by $f(\xi)=x_{\tilde f(\xi)}$, and this function satisfies $j(f)(\kappa)=z_{j(\tilde f)(\kappa)}=z_\alpha=F$.

Let us show that $K$, as above, can be built in $V[G]$. First we show that there is an $M$-generic filter $G_{\tail}$ for $\F_{\tail}$ in $V$. Let $X$ denote the set of all dense subsets of all tails of the forcing $\F_\kappa$. It follows that $X$ has size at most $2^\kappa=\kappa^+$ in $V$. Since $M$ is as in (\ref{eqnextender}), every dense subset $D$ of $\F_{\tail}$ in $M$ is represented by a function $h:P_\kappa\lambda\times\kappa\to X$ as $D=j(h)(j"\lambda,\alpha)$ where $\alpha<F(\lambda)$ and $h\in V$. Since there are at most $(2^\kappa)^{\lambda^{<\kappa}}=(2^{\kappa})^\lambda=2^\lambda=\lambda^+$ such functions we can enumerate them as $\<h_\xi\mid\xi<\lambda^+\>\in V$. Working in $V$ we will build a decreasing sequence $\<p_\xi\mid\xi<\lambda^+\>$ with $p_\xi\in \F_{\tail}$ such that $p_\xi$ meets every dense subset of $\F_{\tail}$ in $M$ represented by some $h_\zeta$ for $\zeta\leq\xi$.  Assume $p_\zeta$ has been constructed for $\zeta<\xi$. At stage $\xi<\lambda$, since $M^\lambda\subseteq M$ in $V$, we may let $r_\xi\in \F_{\tail}$ be a lower bound of all previously constructed conditions. Since $\<j(h_\xi)(j"\lambda,\alpha)\mid\alpha<F(\lambda)\>\in M$ and $\F_{\tail}$ is ${\leq}F(\lambda)$-closed in $M$, it follows that there is a single condition $p_\xi$ below $r_\xi$ meeting every dense subset of $\F_{\tail}$ in the sequence $\<j(h_\xi)(j"\lambda,\alpha)\mid\alpha<F(\lambda)\>$. Since every dense subset of $\F_{\tail}$ in $M$ has a name that is represented by some function on our list $\<h_\xi\mid\xi<\lambda^+\>$, we can use $\<p_\xi\mid\xi<\lambda^+\>$ to generate $G_{\tail}\in V$ an $M$-generic filter for $\F_{\tail}$. Now let $K=G\times G_{\tail}$. It is easy to verify that $K$ is $M$-generic for $j(\F_{\kappa})/p_0\cong\F_\kappa\times F_{\tail}$.

We now show that (\ref{extenderpreserved}) holds. If $x\in M[j(G)]$ then $x$ has a $j(\F_\kappa)$-name, $\dot{x}\in M$ such that $x=\dot{x}_{j(G)}$. Thus we may write $\dot{x}=j(h)(j"\lambda,\alpha)$ for some $h:P_\kappa\lambda\times\kappa\to V$, $\alpha<F(\lambda)$, and $h\in V$. Working in $V[G]$ define a function $\tilde{h}:(P_\kappa\lambda)^V\times\kappa\to V[G]$ by letting $\tilde{h}(z,\xi)=h(z,\xi)_G$ whenever $h(z,\xi)$ is an $\F_\kappa$-name and letting $\tilde{h}(z,\xi)=\emptyset$ otherwise. Then $\tilde{h}\in V[G]$ and $j(\tilde{h})(j"\lambda,\alpha)=j(h)(j"\lambda,\alpha)_{j(G)}=\dot{x}_{j(G)}=x$. \end{proof}

Let us restate the main theorem.

\begin{theorem1} 
Suppose $\GCH$ holds, $\kappa<\lambda$ are regular cardinals, and $F:[\kappa,\lambda]\cap\REG\to\CARD$ is a function such that $\forall\alpha,\beta\in\dom(F)$ $\alpha<\cf(F(\alpha))$ and $\alpha<\beta$ $\implies$ $F(\alpha)\leq F(\beta)$. If there is an elementary embedding $j:V\to M$ with critical point $\kappa$ such that $M^\lambda\subseteq M$ and $j(\kappa)>F(\lambda)$, then there is a cofinality-preserving forcing extension in which $\kappa$ remains $\lambda$-supercompact and $2^\gamma=F(\gamma)$ for every regular cardinal $\gamma\in [\kappa,\lambda]$.
\end{theorem1}

\begin{proof} To simplify notation later on, let $\widetilde{V}$ denote the model we start with and let $\widetilde{j}:\widetilde{V}\to\widetilde{M}$ be an elementary embedding, which is a definable class of $\widetilde{V}$ such that (1) $\widetilde{M}^\lambda\subseteq \widetilde{M}$, (2) $F$ is as in the statement of the theorem, and (3) $\widetilde{j}(\kappa)>F(\lambda)$. Since $\GCH$ implies $\lambda^{<\kappa}=\lambda$ and $F(\lambda)^\lambda=F(\lambda)$, we can further assume without loss of generality that
$$\widetilde{M}=\{\widetilde{j}(h)(\widetilde{j}"\lambda,\alpha)\mid\textrm{$h:P_\kappa\lambda\times\kappa\to \widetilde{V}$ $\and$ $\alpha<F(\lambda)$ $\and$ $h\in \widetilde{V}$}\}.$$
Now let $V=\widetilde{V}[\widetilde{G}]$ be the forcing extension of Lemma \ref{lemmafffeaston}. So $\widetilde{j}$ lifts to $j:V\to M=\widetilde{M}[\widetilde{j}(\widetilde{G})]$ where
$$M=\{j(h)(j"\lambda,\alpha)\mid \textrm{$h:(P_\kappa\lambda)^{\widetilde{V}}\times\kappa$ $\and$ $\alpha<F(\lambda)$ $\and$ $h\in V$}\},$$ 
and there is a partial function $f$ from $\kappa$ to $V_\kappa$ in $V$ such that $j(f)(\kappa)=F$. Working in $V$, define $Y_f\subseteq \dom(f)$ to be the set of all $\gamma<\kappa$ such that the following properties hold.
\begin{enumerate}
\item $\gamma\in\dom(f)$ is an inaccessible cardinal and $f"\gamma\subseteq V_\gamma$.
\item For some regular cardinal $\lambda_\gamma\in [\gamma,\kappa)$, the value of $f(\gamma)$ is a function from $[\gamma,\lambda_\gamma]\cap\REG$ to $\CARD$ satisfying the requirements of Easton's theorem.
\end{enumerate}
Clearly, $Y_f$ must have measure one with respect to the normal measure $U=\{X\subseteq \kappa\mid \kappa\in j(X)\}$. 

Let $\P_{\kappa+1}=\langle (\P_\gamma,\dot{\Q}_\gamma)\mid \gamma<\kappa+1\rangle$ be the length $\kappa+1$ Easton support iteration defined as follows.
\begin{itemize}
\item If $\gamma\in Y_f$ then let $\dot{\Q}_\gamma=\dot{\Q}^{f(\gamma)}_{[\gamma,\lambda_\gamma]}$ be a $\P_\gamma$-name for the Easton-support product $\Q^{f(\gamma)}_{[\gamma,\lambda_\gamma]}$ as defined in $V^{\P_\gamma}$ (see Section \ref{sectionpreliminaries} above for a definition of the poset $\Q^{f(\gamma)}_{[\gamma,\lambda_\gamma]}$). \marginpar{\tiny Alternatively, we could define the iteration using just $f$: if $\P_\gamma$ forces that $\gamma$ is an inaccessible cardinal closed under $f$ and $f(\gamma)$ happens to be a $\P_\gamma$-name for an Easton function on $[\kappa,\lambda_\gamma]$, then let ...}
\item If $\gamma<\kappa$ and $\gamma\notin Y_f$, then $\dot{\Q}_\gamma$ is a $\P_\gamma$-name for trivial forcing.
\item For the stage $\kappa$ forcing, let $\dot{\Q}_\kappa=\dot{\Q}_{[\kappa,\lambda]}^F$ be a $\P_\kappa$-name for the Easton-support product $\Q^{F}_{[\kappa,\lambda]}$ as defined in $V^{\P_\kappa}$.
\end{itemize}
Let $G*H$ be $V$-generic for $\P_\kappa*\dot{\Q}^F_{[\kappa,\lambda]}$. Standard arguments show that $\P_\kappa*\dot\Q^F_{[\kappa,\lambda]}$ preserves cofinalities and that in $V[G][H]$ one has $2^\delta=F(\delta)$ for every regular cardinal $\delta$ in $[\kappa,\lambda]$. Our argument follows Woodin's: we show that there is a further cofinality-preserving forcing extension $V[G][H][g_0]$ in which $\kappa$ remains $\lambda$-supercompact, and the desired continuum function is preserved on the interval $[\kappa,\lambda]$.

\subsection{Lifting the embedding through $\P_\kappa$.}

By elementarity, the forcing $j(\P_\kappa)$ can be factored in $M$ as $j(\P_\kappa)\cong \P_\kappa*\dot{\Q}^{j(f)(\kappa)}_{[\kappa,\lambda_\kappa]}*\dot{\P}_{\tail}$. Since $j(f)(\kappa)=F$ by Lemma \ref{lemmafffeaston}(3), and since $M^\lambda\subseteq M$, it follows that 
$$j(\P_\kappa)\cong\P_\kappa*\dot{\Q}^{F}_{[\kappa,\lambda]}*\dot{\P}_{\tail}.$$
Since the next stage of nontrivial forcing in $j(\P_\kappa)$ beyond $\kappa$ must be beyond $F(\lambda)$, it follows that $\P_{\tail}:=(\dot{\P}_{\tail})_{G*H}$ is ${\leq}F(\lambda)$-closed in $M[G][H]$. 

Now we will show that there is an $M[G][H]$-generic filter for $\P_{\tail}$ in $V[G][H]$. The argument is similar to that in the proof of Lemma \ref{lemmafffeaston}. For $\xi<\kappa$ let $X_\xi$ denote the collection of all nice $\P_\xi$-names for dense subsets of the tail $\dot{\P}_{\xi,\kappa}$ of the iteration $\P_\kappa$. It follows that $X_\xi$ has size at most $2^\kappa$ and hence $X:=\bigcup\{X_\xi\mid\xi<\kappa\}$ has size at most $2^\kappa$. Suppose ${D}\in M[G][H]$ is a dense subset of $\P_{\tail}$ and let $\dot{D}\in M$ be a $\P_\kappa*\dot{\Q}^F_{[\kappa,\lambda]}$-name for $D$. Without loss of generality we may assume that $\dot{D}\in j(X)$ and hence there is a function $h_{\dot{D}}:(P_\kappa\lambda)^{\widetilde{V}}\times\kappa\to X$ in $V$ with $\dot{D}=j(h)(j"\lambda,\alpha)$ for some $\alpha<F(\lambda)$. In other words, every dense subset of $\P_{\tail}$ in $M[G][H]$ has a $\P_\kappa*\dot{\Q}^F_{[\kappa,\lambda]}$-name that is represented by a function $(P_\kappa\lambda)^{\widetilde{V}}\times\kappa\to X$ in $V$. Since there are at most $(2^\kappa)^{\lambda^{<\kappa}}=2^\lambda=\lambda^+$-such functions we can enumerate them as $\<h_\xi\mid\xi<\lambda^+\>\in V$. Working in $V[G][H]$ we build a decreasing sequence $\<p_\xi\mid\xi<\lambda^+\>$ with $p_\xi\in \P_{\tail}$ meeting every dense subset of $\P_{\tail}$ in $M[G][H]$ as follows. Assume $p_\zeta$ has been constructed for $\zeta<\xi$. At stage $\xi<\lambda$, since $M[G][H]^\lambda\subseteq M[G][H]$ in $V[G][H]$, we may let $r_\xi\in \P_{\tail}$ be a lower bound of all previously constructed conditions. Since $\<j(h_\xi)(j"\lambda,\alpha)_{G*H}\mid\alpha<F(\lambda)\>\in M[G][H]$ and $\P_{\tail}$ is ${\leq}F(\lambda)$-closed in $M[G][H]$, it follows that there is a single condition $p_\xi$ below $r_\xi$ meeting every dense subset of $\P_{\tail}$ in the sequence $\<j(h_\xi)(j"\lambda,\alpha)_{G*H}\mid\alpha<F(\lambda)\>$. Since every dense subset of $\P_{\tail}$ in $M[G][H]$ has a name that represented by some function on our list $\<h_\xi\mid\xi<\lambda^+\>$, we can use $\<p_\xi\mid\xi<\lambda^+\>$ to generate an $M[G][H]$-generic filter, call it $G_{\tail}\in V[G][H]$.

Since conditions in $\P_\kappa$ have bounded support, it follows that $j"G\subseteq G*H*G_{\tail}$, and thus we may lift the embedding to $j:V[G]\to M[j(G)]$ in $V[G][H]$ where $j(G)=G*H*G_{\tail}$ (as before, see \cite[Proposition 9.1]{Cummings:Handbook}). We have
$$M[j(G)]=\{j(h)(j"\lambda,\alpha)\mid \textrm{$h:(P_\kappa\lambda)^{\widetilde{V}}\times\kappa\to V[G]$ $\and$ $\alpha<F(\lambda)$ $\and$ $h\in V[G]$}\}.$$

\subsection{Obtaining an $M[j(G)]$-generic filter for $j(\Q^F_{[\kappa,\lambda]})$.}

We will factor the embedding $j:V[G]\to M[j(G)]$ through an ultrapower embedding $j_0$, force over $V[G][H]$ with $j_0(\Q^F_{[\kappa,\lambda]})$ to obtain $g_0$, and then transfer $g_0$ to an $M[j(G)]$-generic filter $g$ for $j(\Q^F_{[\kappa,\lambda]})$. We will also show that forcing with $j_0(\Q^F_{[\kappa,\lambda]})$ over $V[G][H]$ preserves cardinals and does not disturb the continuum function below, or at, $\lambda$.


Let $X=\{j(h)(j"\lambda)\mid h:(P_\kappa\lambda)^{\widetilde{V}}\to V[G] \textrm{ where $h\in V[G]$}\}$. Then it follows that $X\elesub M[j(G)]$. Let $\invMos:M_0'\to M[j(G)]$ be the inverse of the Mostowski collapse $\Mos:X\to M_0'$ and let $j_0:V[G]\to M_0'$ be defined by $j_0:=\pi\circ j$. It follows that $j_0$ is the ultrapower embedding by the measure $U_0:=\{X\subseteq (P_\kappa\lambda)^{\widetilde{V}}\mid j"\lambda\in j(X)\}$ where $U_0\in V[G][H]$. It is shown in \cite{Laver:CertainVeryLargeCardinalsAreNot} that the ground model is definable in any set forcing extension. Thus, by elementarity, $M_0'$ is of the form $M_0[j_0(G)]$, where $M_0\subseteq M_0'$ and $j_0(G)\subseteq j_0(\P_\kappa)\in M_0'$ is $M_0$-generic. Furthermore, $j_0(G)=G*H_0*G^0_{\tail}$ where $H_0$ is $M_0[G]$-generic for $\Mos(\Q^F_{[\kappa,\lambda]})$ and $G^0_{\tail}$ is $M[G][H_0]$-generic for the tail of the iteration $j_0(\P_\kappa)$ above stage $\kappa$. The following diagram is commutative.
$$
\xymatrix{
V[G] \ar[r]^j \ar[rd]_{j_0} & M[j(G)] \\
						 & M_0[j_0(G)] \ar[u]_{\invMos} \\
}
$$
It follows that $j_0$ is a class of $V[G][H_0]$, $M_0[j_0(G)]$ is closed under $\lambda$-sequences in $V[G][H_0]$, and that $j_0(\kappa)>\Mos(F(\lambda))$ with $|j_0(\kappa)|^V=\lambda^+$. Hence the cardinal structures of $M_0[j_0(G)]$ and $V[G][H_0]$ are identical up to and including $\lambda^+$. Furthermore, since $j"\lambda\in X$ it follows that $X^\lambda\subseteq X$ in $V[G][H]$ which implies $\lambda^+\subseteq X$ and $\crit(k)\geq\lambda^+$. Notice that $F\in\dom(\pi)=X$ because $F=j(f)(\kappa)=k(j_0(f))(\kappa)=k(j_0(f)(\kappa))\in\ran(k)=\dom(\pi)$. Thus $\Mos(\Q^F_{[\kappa,\lambda]})=\Q^{\Mos(F)}_{[\kappa,\lambda]}$, where $\pi(F)$ is a function with domain $\pi([\kappa,\lambda])=[\kappa,\lambda]$ satisfying the requirements of Easton's theorem in $M_0[G]$. Let $g_0$ be $V[G][H]$-generic for $j_0(\Q^F_{[\kappa,\lambda]})$.

The next claim shows that forcing with $j_0(\Q^F_{[\kappa,\lambda]})$ over $V[G][H]$ preserves cardinals and does not disturb the continuum function below, or at, $\lambda$.
\begin{claim}\label{claimmild}
$j_0(\Q^F_{[\kappa,\lambda]})$ is ${\leq}\lambda$-distributive and $\lambda^{++}$-c.c. in $V[G][H]$.
\end{claim}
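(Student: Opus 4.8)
The plan is to verify the two chain conditions separately, working entirely in the model $V[G][H]$, using the fact that $j_0(\Q^F_{[\kappa,\lambda]})$ is, by elementarity, computed in $M_0[G]$ as the Easton-support product $\Q^{\pi(F)}_{[\kappa,\lambda]}$ for an Easton function $\pi(F)$ with domain $[\kappa,\lambda]$, and that $j_0(\kappa) > \pi(F(\lambda))$ with $|j_0(\kappa)|^V = \lambda^+$.

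\textbf{The $\lambda^{++}$-c.c.} First I would establish the chain condition. The key input is that $M_0[G]$ (equivalently $M_0[j_0(G)]$) is closed under $\lambda$-sequences in $V[G][H]$, so any antichain of $j_0(\Q^F_{[\kappa,\lambda]})$ of size $\lambda^+$ lying in $V[G][H]$ actually belongs to $M_0[G]$. Thus it suffices to argue that in $M_0[G]$ the poset $\Q^{\pi(F)}_{[\kappa,\lambda]}$ has the $\lambda^{++}$-c.c.; but this is just the standard $\Delta$-system computation from Lemma~\ref{lemmaeaston} applied inside $M_0[G]$, which models $\GCH$ at and below $\lambda$ (its cardinal structure agrees with $V[G][H]$ up to $\lambda^+$, and $\GCH$ at $\lambda$ in $V[G][H]$ is preserved since the stage $\kappa$ forcing only pushed $2^\lambda$ up to $F(\lambda) < j_0(\kappa)$... more carefully, $\GCH$ holds in $M_0[G]$ below $\kappa$ and between $\kappa$ and $\lambda^+$ one uses that $|\Q^{\pi(F)}_{[\kappa,\lambda]}|^{M_0[G]}$ is at most $\pi(F(\lambda))$, which has size $\le \lambda^+$ in $V[G][H]$, so the product of conditions with supports of size $<\lambda$ admits a $\Delta$-system of size $\lambda^{++}$ refined to a compatible family). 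Since an antichain of size $\lambda^{++}$ in $V[G][H]$ would restrict to one of size $\lambda^+$ inside $M_0[G]$ (intersecting with an initial segment, using $\lambda$-closure to capture the relevant fragment in $M_0[G]$), we get a contradiction.

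\textbf{The ${\le}\lambda$-distributivity.} For distributivity I would use the crucial point that the first stage of nontrivial forcing in $\Q^{\pi(F)}_{[\kappa,\lambda]}$ \emph{as seen in $M_0[G]$} lies strictly above $\lambda$: indeed $\kappa < \cf(\pi(F)(\kappa))$ and $\pi(F)(\kappa) \ge \kappa^+$, but more to the point, since $\crit(k) \ge \lambda^+$ and $F = \pi^{-1}(\pi(F))$ agrees with $\pi(F)$ below $\lambda^+$, the Easton-support product $\Q^{\pi(F)}_{[\kappa,\lambda]}$ computed in $M_0[G]$ is, as a forcing notion, $\le \lambda$-closed in $M_0[G]$ — its conditions have Easton support, so any condition has support of size $< \kappa$ at coordinate $\kappa$ and more generally support meeting each regular $\gamma \in [\kappa,\lambda]$ in a set of size $<\gamma$, which is \emph{not} closed; so instead I would argue distributivity directly: given names $\langle \dot D_\alpha \mid \alpha < \lambda \rangle$ for dense open subsets and a condition $p$, since $M_0[G]^\lambda \subseteq M_0[G]$ in $V[G][H]$ the sequence of these names (or rather the relevant fragment) is in $M_0[G]$, and one builds a decreasing $\lambda$-sequence of conditions meeting each $\dot D_\alpha$, taking unions at limits; the union stays a condition because Easton support is closed under increasing unions of length $<\gamma$ at each coordinate $\gamma$, and here all coordinates involved are $\ge \kappa > \lambda$... wait, $\kappa < \lambda$, so coordinates range over $[\kappa,\lambda]$ and a union of length $\lambda$ could violate support at coordinate $\lambda$. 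So the correct argument is: $\Q^{\pi(F)}_{[\kappa,\lambda]}$ factors as $\Q^{\pi(F)}_{\{\kappa\}} \times \cdots$, but the honest statement is that each factor $\mathop{\rm Add}(\gamma, \pi(F)(\gamma))$ for $\gamma > \lambda$ would be $\le\lambda$-closed — however here all $\gamma \le \lambda$. The actual mechanism must be: in $V[G][H]$, the poset $j_0(\Q^F_{[\kappa,\lambda]})$ is (from the point of view of $V[G][H]$, not $M_0[G]$) really $j_0(\Q)^{M_0[G]}$, and its conditions, being elements of $M_0[G]$ and $M_0[G]$ being $\lambda$-closed, together with the fact that its underlying Boolean structure has a $\le\lambda$-closed dense subset when restricted appropriately — concretely, conditions $p$ with $\mathop{\rm dom}(p) \subseteq j_0(\kappa) \times \cdots$ whose support is an element of $M_0[G]$ of $V[G][H]$-size $\le \lambda$ form a dense $\le\lambda$-closed subset, since a decreasing $\lambda$-sequence of such conditions has union with support of size $\le \lambda < j_0(\kappa)$ fitting the Easton requirement at coordinate $j_0(\kappa)$, and at intermediate coordinates $\gamma$ with $\mathop{\rm cf}(\gamma) > \lambda$ the support stabilizes while for $\mathop{\rm cf}(\gamma) \le \lambda$ one must ensure the support stays bounded below $\gamma$. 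I would therefore restrict attention to the dense subset of conditions $p$ for which $\{\gamma : \exists \alpha,\beta\, (\gamma,\alpha,\beta) \in \mathop{\rm dom}(p)\}$ is bounded below $j_0(\kappa)$ by some ordinal of $V[G][H]$-cofinality $>\lambda$ and below each regular $\gamma$ in its span, check this is dense and $\le\lambda$-closed, and conclude distributivity.

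\textbf{Main obstacle.} The step I expect to be most delicate is precisely the distributivity argument: pinning down exactly which dense subset of $j_0(\Q^F_{[\kappa,\lambda]})$ is $\le\lambda$-closed in $V[G][H]$, given that the Easton-support structure does \emph{not} make the full poset closed when the support-coordinates descend through regular cardinals of cofinality $\le\lambda$. The resolution should exploit that $j_0(\kappa)$ has $V[G][H]$-cofinality (indeed size) $\lambda^+$, that $\pi(F)(\lambda) = \pi(F(\lambda))$ has $V[G][H]$-size $\le\lambda^+$ hence bounded fragments are small, and that $M_0[G]$ being $\le\lambda$-closed lets us build the requisite decreasing sequences inside $V[G][H]$ while keeping each intermediate condition in $M_0[G]$; once the right dense set is identified the closure check is routine. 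The $\lambda^{++}$-c.c. is comparatively straightforward, being the usual $\Delta$-system argument transported into $M_0[G]$ via $\lambda$-closure, so I would present it first and spend the bulk of the proof on distributivity.
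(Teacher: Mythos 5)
Your proposal misidentifies the poset in question, and this derails both halves of the argument. The forcing $j_0(\Q^F_{[\kappa,\lambda]})$ is \emph{not} $\Q^{\pi(F)}_{[\kappa,\lambda]}$ computed in $M_0[G]$: that poset is $\pi(\Q^F_{[\kappa,\lambda]})=\Mos(\Q^F_{[\kappa,\lambda]})$, the stage-$\kappa$ forcing of $j_0(\P_\kappa)$ for which $H_0$ is generic. The poset in the claim is the image of $\Q^F_{[\kappa,\lambda]}$ under the embedding $j_0$, i.e.\ by elementarity the Easton-support product computed in $M_0[j_0(G)]$ on the interval $[j_0(\kappa),j_0(\lambda)]$, whose least coordinate $j_0(\kappa)$ exceeds $\pi(F(\lambda))\geq\lambda^+$. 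Hence it is ${<}j_0(\kappa)$-closed, in particular ${\leq}\lambda$-closed, in $M_0[j_0(G)]$ outright; your lengthy search for a ${\leq}\lambda$-closed dense subset, and the worry about supports descending through regular cardinals of cofinality ${\leq}\lambda$, address a nonissue. The genuine subtlety for distributivity, which your outline never touches, is that the $\lambda$-closure of $M_0[j_0(G)]$ is available only in $V[G][H_0]$, not in $V[G][H]$ (the quotient $\Q^F_{[\kappa,\lambda]}/H_0$ adds new $\lambda$-sequences lying outside $V[G][H_0]\supseteq M_0[j_0(G)]$, so your repeated premise that $M_0[G]$ is $\lambda$-closed in $V[G][H]$ is false). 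Closure therefore gives only that $j_0(\Q^F_{[\kappa,\lambda]})$ is ${\leq}\lambda$-closed in $V[G][H_0]$; to reach $V[G][H]$ the paper factors $H$ over $H_0$, observes that $\Q^F_{[\kappa,\lambda]}/H_0\cong\Q^{F^*}_{[\kappa,\lambda]}$ is $\lambda^+$-c.c.\ in $V[G][H_0]$, and applies Easton's Lemma to conclude ${\leq}\lambda$-distributivity in $V[G][H]$.

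The chain-condition half has a second, independent gap. Your step ``any antichain of size $\lambda^+$ lying in $V[G][H]$ actually belongs to $M_0[G]$'' does not follow: closure under $\lambda$-sequences (even where it holds) says nothing about sequences of length $\lambda^+$ or $\lambda^{++}$, and, as above, the needed closure fails in $V[G][H]$ anyway. Moreover, running a $\Delta$-system argument inside $M_0[j_0(G)]$ only shows $j_0(\Q^F_{[\kappa,\lambda]})$ is $j_0(\lambda^{++})$-c.c.\ there, which does not control antichains of $V[G][H]$-cardinality $\lambda^{++}$. The paper's mechanism is different: every condition $p$ can be written $p=j_0(h_p)(j_0"\lambda)$ with $h_p:(P_\kappa\lambda)^{\widetilde V}\to\Q^F_{[\kappa,\lambda]}$ in $V[G]$, so an antichain of $j_0(\Q^F_{[\kappa,\lambda]})$ in $V[G][H]$ transfers to an antichain of the full-support product $\overline{\Q}^F_{[\kappa,\lambda]}$ of $\lambda$ copies of $\Q^F_{[\kappa,\lambda]}$ taken in $V[G]$; one then shows $\overline{\Q}^F_{[\kappa,\lambda]}$ is $\lambda^{++}$-c.c.\ in $V[G][H]$ by pulling a purported antichain back, via names and deciding conditions, to an antichain of $\Q^F_{[\kappa,\lambda]}\times\overline{\Q}^F_{[\kappa,\lambda]}\cong\overline{\Q}^F_{[\kappa,\lambda]}$ in $V[G]$, where the usual $\Delta$-system argument applies. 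Without some such transfer down to $V[G]$, your outline establishes neither the distributivity nor the chain condition.
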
 

\begin{proof}[Proof of Claim \ref{claimmild}]
First we will demonstrate the distributivity. By elementarity, $j_0(\Q^F_{[\kappa,\lambda]})$ is ${\leq}\lambda$-closed in $M_0[j_0(G)]$. Since $G^0_{\tail}\in V[G][H_0]$, it follows that $j_0(\Q^F_{[\kappa,\lambda]})\in V[G][H_0]$ and since $M_0[j_0(G)]$ is closed under $\lambda$-sequences in $V[G][H_0]$, it follows that $j_0(\Q^F_{[\kappa,\lambda]})$ is ${\leq}\lambda$-closed in $V[G][H_0]$. Notice that $H_0$ is $V[G]$-generic for $\Mos(\Q^F_{[\kappa,\lambda]})=\Q^{\Mos(F)}_{[\kappa,\lambda]}=\prod_{\gamma\in[\kappa,\lambda]\cap\REG}\Add(\gamma,\pi(F(\gamma)))^{V[G]}$ and that the quotient forcing $\Q^F_{[\kappa,\lambda]}/H_0$ is $\lambda^+$-c.c. in $V[G][H_0]$ since it is isomorphic to $\Q^{F^*}_{[\kappa,\lambda]}$ in that model, where $F^*$ is a function with domain $\REG\cap[\kappa,\lambda]$ such that $F^*(\mu)=0$ if $F(\mu)\leq\lambda^+$ and $F^*(\mu)=F(\mu)$ otherwise. Easton's Lemma states that ${\leq}\lambda$-closed forcing remains ${\leq}\lambda$-distributive after $\lambda^+$-c.c. forcing. Hence $j_0(\Q^F_{[\kappa,\lambda]})$ is ${\leq}\lambda$-distributive in $V[G][H]$. 

Now let us show that $j_0(\Q^F_{[\kappa,\lambda]})$ is $\lambda^{++}$-c.c. in $V[G][H]$. Each $p\in j_0(\Q^F_{[\kappa,\lambda]})$ can be written as $p=j_0(h_p)(j_0"\lambda)$ for some function $h_p:(P_\kappa\lambda)^{\widetilde{V}}\to \Q^F_{[\kappa,\lambda]}$ with $h_p\in V[G]$. For each $p\in j_0(\Q^F_{[\kappa,\lambda]})$ the domain of $h_p$ has size $\lambda$ in $V[G]$, and thus $h_p$ leads to $\overline{h}_p:\lambda\to \Q^F_{[\kappa,\lambda]}$, which can be viewed as a condition in the full support product of $\lambda$ copies of $\Q^F_{[\kappa,\lambda]}$ taken in $V[G]$. Let us denote this product by $\overline{\Q}^F_{[\kappa,\lambda]}$. We will show that $j_0(\Q^F_{[\kappa,\lambda]})$ is $\lambda^{++}$-c.c. in $V[G][H]$ by arguing that $\overline{\Q}^F_{[\kappa,\lambda]}$ is $\lambda^{++}$-c.c. in $V[G][H]$ and that an antichain of $j_0(\Q^F_{[\kappa,\lambda]})$ of size $\lambda^{++}$ in $V[G][H]$ would lead to an antichain of $\overline{\Q}^F_{[\kappa,\lambda]}$ of size $\lambda^{++}$ in $V[G][H]$.

We now briefly describe how to prove that $\overline{\Q}^F_{[\kappa,\lambda]}$ is $\lambda^{++}$-c.c. in $V[G][H]$. An easy delta-system argument shows that $\overline{\Q}^F_{[\kappa,\lambda]}$ is $\lambda^{++}$-c.c. in $V[G]$. Suppose $A$ is an antichain of size $\delta$ of $\overline{\Q}^F_{[\kappa,\lambda]}$ in $V[G][H]$. We will show that $A$ leads to an antichain of $\overline{\Q}^F_{[\kappa,\lambda]}\cong\Q^F_{[\kappa,\lambda]}\times\overline{\Q}^F_{[\kappa,\lambda]}$ in $V[G]$ of size $\delta$. Let 
$$q\forces\textrm{$\dot{A}$ is an antichain of $\textstyle\overline{\Q}^F_{[\kappa,\lambda]}$ and $\dot{f}:\delta\to\dot{A}$ is bijective}$$
where $q\in \Q^F_{[\kappa,\lambda]}$ and $\dot{A}_H=A$. For each $\alpha<\delta$ let $q_\alpha\leq q$ be such that $q_\alpha\forces\dot{f}(\alpha)=\check{p}_\alpha$ where $p_\alpha\in\overline{\Q}^F_{[\kappa,\lambda]}$. We have $\overline{\Q}^F_{[\kappa,\lambda]}\cong\Q^F_{[\kappa,\lambda]}\times\overline{\Q}^F_{[\kappa,\lambda]}$ in $V[G]$, and it is easy to check that $W:=\{(q_\alpha,p_\alpha)\mid\alpha<\delta\}$ is an antichain of $\Q^F_{[\kappa,\lambda]}\times\overline{\Q}^F_{[\kappa,\lambda]}$ in $V[G]$ of size $\delta$. This shows that $\overline{\Q}^F_{[\kappa,\lambda]}$ is $\lambda^{++}$-c.c. in $V[G][H]$.

It remains to show that an antichain of $j_0(\Q^F_{[\kappa,\lambda]})$ in $V[G][H]$ with size $\lambda^{++}$ would lead to an antichain of $\overline{\Q}^F_{[\kappa,\lambda]}$ in $V[G][H]$ of size $\lambda^{++}$. Suppose $A$ is an antichain of $j_0(\Q^F_{[\kappa,\lambda]})$ with size $\delta$ in $V[G][H]$. Each $p\in A$ is of the form $j_0(h_p)(j"\lambda)$ where $h_p:(P_\kappa\lambda)^{\tilde{V}}\to \Q^F_{[\kappa,\lambda]}$. As mentioned above, each $h_p$ leads to a condition $\overline{h}_p\in\overline{\Q}^F_{[\kappa,\lambda]}$. It is easy to check that $\overline{A}:=\{\overline{h}_p\mid p\in A\}$ is an antichain of $\overline{\Q}^F_{[\kappa,\lambda]}$.
\end{proof}

Let us now show that $g_0$ can be transferred along $\invMos$ to an $M[j(G)]$-generic filter for $j(\Q^F_{[\kappa,\lambda]})$.
\begin{claim}\label{claimtransfer}
$\invMos"g_0\subseteq j(\Q^F_{[\kappa,\lambda]})$ generates an $M[j(G)]$-generic filter $g$ for $j(\Q^F_{[\kappa,\lambda]})$.
\end{claim}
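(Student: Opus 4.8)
The plan is to verify the two things required to conclude that $\invMos"g_0$ generates an $M[j(G)]$-generic filter: first, that it actually generates a filter (equivalently, that $\invMos$ sends conditions in $g_0$ to pairwise-compatible conditions, which is automatic since $\invMos$ is an elementary embedding and hence order- and incompatibility-preserving, so $\invMos"g_0$ is a directed subset of $j(\Q^F_{[\kappa,\lambda]})$ and its upward closure $g$ is a filter), and second, the genericity of $g$ over $M[j(G)]$. For genericity, I would use the standard transfer criterion: $g$ meets every dense (equivalently, every maximal antichain) subset of $j(\Q^F_{[\kappa,\lambda]})$ lying in $M[j(G)]$ provided that for every such $D \in M[j(G)]$, the preimage $\invMos^{-1}(D) = \{ q \in M_0[j_0(G)] : \invMos(q) \in D\}$ is dense (or predense) in $\Mos(\Q^F_{[\kappa,\lambda]}) = j_0(\Q^F_{[\kappa,\lambda]})$ and lies in $M_0[j_0(G)]$; then $g_0$, being $V[G][H]$-generic for $j_0(\Q^F_{[\kappa,\lambda]})$ (and in particular generic over the submodel $M_0[j_0(G)]$), meets $\invMos^{-1}(D)$, and applying $\invMos$ shows $g$ meets $D$.

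So the two points to check are: (i) $\invMos^{-1}(D) \in M_0[j_0(G)]$, and (ii) $\invMos^{-1}(D)$ is predense in $j_0(\Q^F_{[\kappa,\lambda]})$. For (i), I would use that $\invMos = k$ has critical point $\geq \lambda^+$ and that $D$, being a subset of $j(\Q^F_{[\kappa,\lambda]})$ in $M[j(G)]$, is of the form $D = j(h)(j"\lambda,\alpha)$ for some $h \in V[G]$ and $\alpha < F(\lambda)$, by the representation of $M[j(G)]$ established above. Since $j = k \circ j_0$, we have $D = k(j_0(h)(j_0"\lambda, \alpha))$ using that $k$ fixes $\alpha < F(\lambda) < \lambda^+ \leq \crit(k)$ and that $k(j_0"\lambda) = j"\lambda$; hence $D = k(D_0)$ where $D_0 := j_0(h)(j_0"\lambda,\alpha) \in M_0[j_0(G)]$, and then $\invMos^{-1}(D) \supseteq D_0$ up to the obvious identification — more carefully, $k^{-1}(D) = D_0$ because $k$ is injective. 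For (ii), elementarity of $k : M_0[j_0(G)] \to M[j(G)]$ gives that $D$ is predense in $j(\Q^F_{[\kappa,\lambda]}) = k(j_0(\Q^F_{[\kappa,\lambda]}))$ iff $D_0$ is predense in $j_0(\Q^F_{[\kappa,\lambda]})$, which is exactly what is needed. Here one uses that $j_0(\Q^F_{[\kappa,\lambda]}) \in M_0[j_0(G)]$ (shown in the proof of Claim \ref{claimmild}) and that $k \restriction j_0(\Q^F_{[\kappa,\lambda]})$ need not be the identity but nonetheless faithfully transports maximal antichains because $k$ is elementary.

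The main subtlety — and the step I would be most careful about — is the interaction between $k$ and the conditions of $j_0(\Q^F_{[\kappa,\lambda]})$: unlike on the ordinals $\leq \lambda$, the map $k$ genuinely moves conditions of $j_0(\Q^F_{[\kappa,\lambda]})$, since such a condition has size $> \lambda$ and lives partly above $\crit(k) = \lambda^+$. Thus one cannot say $k$ fixes $j_0(\Q^F_{[\kappa,\lambda]})$ pointwise; what one uses instead is purely the elementarity of $k$ together with the facts that $\Q^F_{[\kappa,\lambda]}$, $F$, and the relevant names all lie in the domain $X = \dom(\pi)$, so that $\pi$ and $k$ commute appropriately with all the relevant structure. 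In particular $k(j_0(\Q^F_{[\kappa,\lambda]})) = j(\Q^F_{[\kappa,\lambda]})$ holds because $\Q^F_{[\kappa,\lambda]} \in X$, and this is what lets one pull a dense set $D \subseteq j(\Q^F_{[\kappa,\lambda]})$ back along $k$ to a dense $D_0 \subseteq j_0(\Q^F_{[\kappa,\lambda]})$ in $M_0[j_0(G)]$. Once those commutation facts are in hand, the genericity transfer is the routine argument sketched above, and directedness of $\invMos"g_0$ follows immediately from $\invMos$ preserving $\leq$ and $\bot$.
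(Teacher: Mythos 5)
There is a genuine gap at the crucial step where you pull the dense set $D$ back along $k$. You write that $D=j(h)(j"\lambda,\alpha)=k\bigl(j_0(h)(j_0"\lambda,\alpha)\bigr)$ ``using that $k$ fixes $\alpha<F(\lambda)<\lambda^+\leq\crit(k)$,'' but $F(\lambda)<\lambda^+$ is false in every case of interest: $F(\lambda)$ is a cardinal above $\lambda$, and the whole point of the theorem is that typically $F(\lambda)\geq\lambda^{++}$. So the ordinal parameter $\alpha<F(\lambda)$ is in general far above $\crit(k)$, is not fixed by $k$, and need not lie in $\ran(k)=X$ at all (indeed $|X\cap F(\lambda)|\leq\lambda^+$, since $X$ consists only of points of the form $j(h)(j"\lambda)$). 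Consequently $D$ itself need not be in the range of $k$, there is no reason the ``preimage'' $k^{-1}(D)$ should be a dense subset of $j_0(\Q^F_{[\kappa,\lambda]})$ lying in $M_0[j_0(G)]$, and the object $D_0=j_0(h)(j_0"\lambda,\alpha)$ you propose may not even be defined, since $j_0(\kappa)$ has size $\lambda^+$ in $V$ and so $\alpha$ can exceed it, taking $(j_0"\lambda,\alpha)$ outside $\dom(j_0(h))$. Even when it is defined, $k(j_0(h)(j_0"\lambda,\alpha))=j(h)(j"\lambda,k(\alpha))$, which differs from $D$ whenever $k(\alpha)\neq\alpha$. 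The directedness/filter part of your argument and the commutation facts $k(j_0(\Q^F_{[\kappa,\lambda]}))=j(\Q^F_{[\kappa,\lambda]})$ are fine; it is precisely the transfer of a single dense set along $k$ that breaks.

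The repair is to stop trying to pull back $D$ itself and instead pull back the whole $F(\lambda)$-indexed family at once: assume without loss of generality that every value of $h$ is dense in $\Q^F_{[\kappa,\lambda]}$, define $\widetilde{h}\in M_0[j_0(G)]$ on $\pi(F(\lambda))$ by $\widetilde{h}(\xi)=j_0(h)(j_0"\lambda,\xi)$, and note that $k(\widetilde{h})$ has domain $F(\lambda)$ with $k(\widetilde{h})(\alpha)=j(h)(j"\lambda,\alpha)=D$ --- this recovers the parameter $\alpha$ on the $k$-side without ever requiring $\alpha\in\ran(k)$. Since $j_0(\Q^F_{[\kappa,\lambda]})$ is ${\leq}\pi(F(\lambda))$-distributive in $M_0[j_0(G)]$, the intersection $\widetilde{D}=\bigcap\ran(\widetilde{h})$ is open dense there, so $g_0$ meets it, and for $p\in g_0\cap\widetilde{D}$ elementarity of $k$ gives $k(p)\in k(\widetilde{D})\subseteq k(\widetilde{h})(\alpha)=D$. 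This distributivity-and-intersection step is the essential content of the claim that your proposal is missing.
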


\begin{proof}
Suppose $D\in M[j(G)]$ is an open dense subset of $j(\Q^F_{[\kappa,\lambda]})$ and let $D=j(h)(j"\lambda,\alpha)$ for some $h\in V[G]$ with $\dom(h)=(P_\kappa\lambda)^{\widetilde{V}}\times\kappa$ and $\alpha<F(\lambda)$. Without loss of generality, let us assume that every element of the range of $h$ is a dense subset of $\Q^F_{[\kappa,\lambda]}$ in $V[G]$. We have $D=j(h)(j"\lambda,\alpha)=k(j_0(h))(j"\lambda,\alpha)$. Since $\pi(F(\lambda))<j_0(\kappa)$ we may define a function $\widetilde{h}\in M_0[j_0(G)]$ with $\dom(\widetilde{h})=\Mos(F(\lambda))$ by $\widetilde{h}(\xi)=j_0(h)(j_0"\lambda,\xi)$. Then $\dom(k(\widetilde{h}))=k(\Mos(F(\lambda)))=F(\lambda)$ and since the critical point of $k$ is greater than $\lambda$ we have $k(\widetilde{h})(\alpha)=k(j_0(h))(k(j_0"\lambda),\alpha)=j(h)(j"\lambda,\alpha)=D$. Now the range of $\widetilde{h}$ is a collection of $\Mos(F(\lambda))$ open dense subsets of $j_0(\Q^F_{[\kappa,\lambda]})$. Since $j_0(\Q^F_{[\kappa,\lambda]})$ is ${\leq}\Mos(F(\lambda))$-distributive in $M_0[j_0(G)]$, one sees that $\widetilde{D}=\bigcap\ran(\widetilde{h})$ is an open dense subset of $j_0(\Q^F_{[\kappa,\lambda]})$. Hence there is a condition $p\in g_0\cap\widetilde{D}$ and by elementarity, $\invMos(p)\in \invMos"g_0\cap \invMos(\widetilde{D})\subseteq D$.
\end{proof}

\subsection{Performing surgery.}

With the $M[j(G)]$-generic filter $g$ for $j(\Q^F_{[\kappa,\lambda]})$ in hand, we surgically modify $g$ to obtain $g^*$ with $j"H\subseteq g^*$, and then argue that $g^*$ remains an $M[j(G)]$-generic filter for $j(\Q^F_{[\kappa,\lambda]})$.

Now let us define $g^*$. For $p\in g\subseteq j(\Q^F_{[\kappa,\lambda]})$, define a new partial function $p^*$ with $\dom(p^*)=\dom(p)$ by letting $p^*$ be equal to $p$ unless $p$ contradicts $j"H$, in which case we flip the appropriate bits so that $p^*$ is compatible with the elements of $j"H$. More precisely, working in $V[G][H]$, let $Q$ be the partial function with $\dom(Q)\subseteq\dom(j(\Q^F_{[\kappa,\lambda]}))$ defined by $Q=\bigcup j"H$. Given $p\in g$, let $p^*$ be the condition \marginpar{\tiny TYPO. We need to SHOW it is a condition.} in $j(\Q^F_{[\kappa,\lambda]})$ obtained from $p$ by altering $p$ on $\dom(p)\cap\dom(Q)$ so that $p^*$ agrees with $Q$. Let 
$$g^*=\{p^*\mid p\in g\}.$$ Clearly, $j"H\subseteq g^*$, and it remains to argue that $g^*$ remains an $M[j(G)]$-generic filter for $j(\Q^F_{[\kappa,\lambda]})$.

Since conditions $p\in \Q^F_{[\kappa,\lambda]}$ can have size greater than the critical point of $j$, it follows that $j(p)$ need not equal $j"p$. Thus, some of the modifications we made in obtaining $g^*$ occurred off of the range of $j$ on the ghost coordinates, and so Woodin's key lemma does not apply. We will use the following lemma to show that $g^*$ remains an $M[j(G)]$-generic filter.

\begin{lemma}[New Key Lemma]\label{newkeylemmaeaston}
Suppose $B\in M[j(G)]$ with $B\subseteq j(\dom(\Q^F_{[\kappa,\lambda]}))$ and $|B|^{M[j(G)]}\leq j(\lambda)$. Then the set 
$$\mathcal{I}_B=\{\dom(j(q))\cap B\mid \textrm{$q\in \Q^F_{[\kappa,\lambda]}$}\}$$
has size at most $\lambda$ in $V[G][H]$.
\end{lemma}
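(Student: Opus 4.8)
The plan is to analyze the structure of a condition $j(q)$ for $q \in \Q^F_{[\kappa,\lambda]}$ and see how its domain meets an arbitrary small set $B$. A condition $q \in \Q^F_{[\kappa,\lambda]}$ is a function whose domain is a subset of $\dom(\Q^F_{[\kappa,\lambda]})$ of bounded Easton support; crucially, since $\GCH$ holds in $V[G]$ and conditions have size $<\lambda$, we have $|\Q^F_{[\kappa,\lambda]}| = F(\lambda)$ and there are at most $F(\lambda)$ possible domains $\dom(q)$. The point is that $\dom(j(q))$ is determined by $j(\dom(q))$, so it suffices to understand, for each fixed possible domain $d = \dom(q) \subseteq \dom(\Q^F_{[\kappa,\lambda]})$, how the set $j(d) \cap B$ can vary, and to bound the total number of such intersections by $\lambda$.

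First I would use the representation of $M[j(G)]$ afforded by the extender: every element of $M[j(G)]$, in particular $B$, is of the form $j(H)(j"\lambda,\alpha)$ for some $H : (P_\kappa\lambda)^{\widetilde V} \times \kappa \to V[G]$ with $H \in V[G]$ and $\alpha < F(\lambda)$. Fix such an $H$ and $\alpha$. Now for a condition $q$, the set $\dom(j(q)) \cap B = j(\dom(q)) \cap B$. The key observation is that a typical coordinate $(\gamma,\beta,\epsilon) \in \dom(\Q^F_{[\kappa,\lambda]})$ gets sent by $j$ in a controlled way: if $\gamma < \kappa$ then $j$ fixes a tail-initial-segment-type structure, while if $\gamma \in [\kappa,\lambda]$ then $j(\gamma) \geq \gamma$ and $j$ acts on the middle and last coordinates via $j$ applied to ordinals below $\gamma$ and below $F(\gamma)$ respectively. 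The upshot I want is: the set $j(\dom(q)) \cap B$ only depends on $\dom(q)$ through a bounded amount of information, namely through $\dom(q) \cap C$ for a fixed "small" set $C \in V[G]$ — essentially the preimage under $j$ of the "spread out" part of $B$ — together with the (at most $\lambda$-many, by Easton support and $\GCH$ in $V[G]$) possibilities for how $\dom(q)$ restricted to each regular $\gamma \in [\kappa,\lambda]$ looks. Since $|B|^{M[j(G)]} \leq j(\lambda)$, the relevant preimage structure has size at most $\lambda$ in $V[G][H]$.

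Concretely, I would break $B$ into $\lambda$ pieces $B = \bigcup_{\xi<\lambda} B_\xi$ with each $B_\xi \in M[j(G)]$ — this is the "breaking into $\lambda$ pieces, each in the relevant model" idea flagged in the introduction — chosen so that on each piece the interaction with $j(\dom(q))$ is governed by a single $V[G]$-object. Then $j(\dom(q)) \cap B = \bigcup_{\xi<\lambda} (j(\dom(q)) \cap B_\xi)$, and for each $\xi$ there are only boundedly many possibilities for $j(\dom(q)) \cap B_\xi$: either a $\lambda$-sized bound coming from the Easton support condition $|\dom(q)\restriction\gamma| < \gamma \leq \lambda$ for the "late" coordinates, or a bound coming from the fact that the "ghost" part of $j(\dom(q))$ lying in $B_\xi$ is the $j$-image (hence completely determined by) of a single coordinate-block of $\dom(q)$, of which there are at most $\lambda$ many after using $\GCH$ in $V[G]$. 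Counting: at most $\lambda$ choices of $\xi$, and for each at most $\lambda$ possibilities, gives at most $\lambda \cdot \lambda = \lambda$ possible values for $j(\dom(q)) \cap B$ in $V[G][H]$, as desired.

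The main obstacle I anticipate is the second half of the last paragraph: precisely controlling the "ghost coordinates" of $j(q)$ — i.e., the coordinates of $j(q)$ not of the form $j(x)$ for $x \in q$ — and showing that their intersection with a small set $B$ is, piece by piece, pinned down by a $V[G]$-amount of data. This requires a careful bookkeeping of how $j$ acts on the three-coordinate structure of $\dom(\Q^F_{[\kappa,\lambda]})$ at each regular $\gamma\in[\kappa,\lambda]$, using that $j"\lambda \in X$ (so that $\lambda^+ \subseteq X$ and $\crit(k) > \lambda$), together with the Easton support bound and $\GCH$ in $V[G]$ to keep the count at $\lambda$ rather than $\lambda^+$. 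Once the decomposition $B = \bigcup_{\xi<\lambda} B_\xi$ is set up correctly, the rest is a routine counting argument.
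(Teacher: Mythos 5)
There is a genuine gap: the entire argument hinges on a decomposition $B=\bigcup_{\xi<\lambda}B_\xi$ together with a ``per-piece'' control of the ghost coordinates of $j(q)$, and you never construct either; you explicitly defer this as ``the main obstacle,'' so the core of the lemma is not proved. Moreover, the partial counting claims you put in its place do not hold: the number of possibilities for $\dom(q)$ restricted to a single regular $\gamma\in[\kappa,\lambda]$ is not bounded by $\lambda$ (already the singletons $\{(\gamma,\xi,\zeta)\}$ with $\zeta<F(\gamma)$ give up to $F(\gamma)>\lambda$ many), so ``at most $\lambda$ choices of $\xi$ times at most $\lambda$ possibilities per piece'' is unsupported. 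Also, the set $C$ you want cannot be ``the preimage under $j$ of $B$'': the points of $B$ that matter may be ghost coordinates, i.e.\ not of the form $j(x)$ at all, so a pointwise preimage misses exactly the part of the problem you are trying to solve. (You also took the introduction's remark about ``breaking into $\lambda$ pieces'' as a hint for the proof of the lemma, but that phrase describes how the lemma is \emph{applied} — enumerating $\mathcal{I}_{\dom(p)}$ and $\mathcal{I}_B$ in $\lambda$ steps — not its proof.)

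The missing idea is a covering, not a preimage or a decomposition, and it uses the hypothesis $|B|^{M[j(G)]}\leq j(\lambda)$ in a way your sketch never does: write $B=j(h)(j''\lambda,\alpha)$ where now $h:(P_\kappa\lambda)^{\widetilde{V}}\times\kappa\to P_{\lambda^+}\bigl(\dom(\Q^F_{[\kappa,\lambda]})\bigr)^{V[G]}$, $h\in V[G]$, $\alpha<F(\lambda)$; this is possible precisely because $B$ has $M[j(G)]$-size at most $j(\lambda)$. Then $C:=\bigcup\ran(h)\in V[G]$ is a subset of $\dom(\Q^F_{[\kappa,\lambda]})$ of size at most $\lambda$, and $B\subseteq\bigcup\ran(j(h))=j(C)$. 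For any $q\in\Q^F_{[\kappa,\lambda]}$, putting $d:=\dom(q)\cap C$ (which has size $<\lambda$ by the Easton support condition at $\lambda$) one gets $j(d)=\dom(j(q))\cap j(C)\supseteq\dom(j(q))\cap B$, hence $\dom(j(q))\cap B=j(d)\cap B$. Thus $\mathcal{I}_B\subseteq\{j(d)\cap B\mid d\in P_\lambda(C)^{V[G]}\}$, which has size at most $\lambda^{<\lambda}=\lambda$ by $\GCH$ in $V[G]$. This single observation handles the ghost coordinates automatically — they are absorbed into $j(d)$ — and no bookkeeping of the three-coordinate structure or piecewise analysis of $B$ is needed.
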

\begin{proof}
Let $B$ be as in the statement of the lemma and let $B=j(h)(j"\lambda,\alpha)$ where $h:(P_\kappa\lambda)^{\widetilde{V}}\times\kappa\to P_{\lambda^+}(\dom(\Q^F_{[\kappa,\lambda]}))^{V[G]}$, $\alpha<F(\lambda)$, and $h\in V[G]$. Then $\bigcup\ran(h)$ is a subset of $\dom(\Q^F_{[\kappa,\lambda]})$ in $V[G]$ with $|\bigcup\ran(h)|^{V[G]}\leq\lambda$. Since $V[G]\models \lambda^{<\lambda}=\lambda$ by our $\GCH$ assumption, it will suffice to show that 
$$\mathcal{I}_B\subseteq\{j(d)\cap B\mid d\in P_\lambda(\bigcup\ran(h))^{V[G]}\}.$$
Suppose $\dom(j(q))\cap B\in \mathcal{I}_B$ where $q\in \Q^F_{[\kappa,\lambda]}$. We will show that $\dom(j(q))\cap B=j(d)\cap B$ for some $d\in P_\lambda(\bigcup\ran(h))^{V[G]}$. Let $d:=\dom(q)\cap\bigcup\ran(h)$, then $\dom(j(q))\cap B=j(d)\cap B$ since
$$j(d)=\dom(j(q))\cap\bigcup\ran(j(h))\supseteq \dom(j(q))\cap B.$$
\end{proof}

First let us show that the elements of $g^*$ are conditions in $j(\Q^F_{[\kappa,\lambda]})$ by showing that $g^*\subseteq M[j(G)]$. Suppose $p\in g^*$ and let $\mathcal{I}_{\dom(p)}$ be the collection of all possible intersections of $\dom(p)$ with the domains of conditions $j(q)$ where $q\in H$. By the genericity of $H$, it follows that $H$ is a maximal filter on $\Q^F_{[\kappa,\lambda]}$ and therefore,
$$\mathcal{I}_{\dom(p)}=\{\dom(j(q))\cap\dom(p)\mid q\in \Q^F_{[\kappa,\lambda]}\}.$$ 
Since $p$ is a condition in $j(\Q^F_{[\kappa,\lambda]})$ we have $|\dom(p)|^{M[j(G)]}<j(\lambda)$ and hence by Lemma \ref{newkeylemmaeaston}, it follows that $V[G][H]\models |\mathcal{I}_{\dom(p)}|\leq\lambda$. Let $\<I_\alpha\mid\alpha<\lambda\>\in V[G][H]$ be an enumeration of $\mathcal{I}_{\dom(p)}$. For each $\alpha<\lambda$ choose a particular $q_\alpha\in H$ with $\dom(j(q_\alpha))\cap\dom(p)= I_\alpha$. Since $M[j(G)]$ is closed under $\lambda$-sequences in $V[G][H]$, it follows that $\{ j(q_\alpha)\mid\alpha<\lambda\}$ is a directed subset of $j(\Q^F_{[\kappa,\lambda]})$ in $M[j(G)]$. Notice that $j(\Q^F_{[\kappa,\lambda]})$ is ${<}j(\kappa)$-directed closed in $M[j(G)]$, and so we may let $\bar{p}:=\bigcup\{j(q_\alpha)\mid\alpha<\lambda\}$ be the corresponding partial master condition in $j(\Q^F_{[\kappa,\lambda]})$. Since $p,\bar{p}\in M[j(G)]$ one can easily see that $p^*\in M[j(G)]$ since it can be obtained by modifying $p$ to agree with $\bar{p}$.

Now we will show that $g^*$ remains $M[j(G)]$-generic for $j(\Q^F_{[\kappa,\lambda]})$. Suppose $A\in M[j(G)]$ is a maximal antichain of $j(\Q^F_{[\kappa,\lambda]})$. Since $\Q^F_{[\kappa,\lambda]}$ is $\lambda^+$-c.c. in $V[G]$, it follows by elementarity that the set $B:=\bigcup\{\dom(r)\mid r\in A\}$ has size at most $j(\lambda)$ in $M[j(G)]$. Hence it follows from Lemma \ref{newkeylemmaeaston} that the set of intersections
$$\mathcal{I}_B:=\{\dom(j(q))\cap B\mid q\in\Q^F_{[\kappa,\lambda]}\}$$
satisfies $V[G][H]\models|\mathcal{I}_B|\leq\lambda$. Let $\<E_\alpha\mid\alpha<\lambda\>\in V[G][H]$ be an enumeration of $\mathcal{I}_B$. For each $\alpha<\lambda$, choose $q_\alpha\in H$ with $\dom(j(q_\alpha))\cap B=E_\alpha$. Choose $r_\alpha\in g$ with $\dom(r_\alpha)=\dom(j(q_\alpha))$. Define 
$$\Delta_\alpha:=\{(\delta,\xi,\zeta)\in\dom(r_\alpha)\mid r_\alpha(\delta,\xi,\zeta)\neq j(q_\alpha)(\delta,\xi,\zeta)\}.$$
Clearly, $\Delta_\alpha\in M[j(G)]$ since $r_\alpha,j(q_\alpha)\in M[j(G)]$. Then $\<\Delta_\alpha\mid\alpha<\lambda\>\in M[j(G)]$ because $M[j(G)]$ is closed under $\lambda$-sequences in $V[G][H][g_0]$ by the ${\leq}\lambda$-distributivity of the forcing adding $g_0$. Hence
$$\Delta:=\bigcup\{\Delta_\alpha\mid\alpha<\lambda\}$$
is in $M[j(G)]$. The automorphism $\pi_\Delta:j(\Q^F_{[\kappa,\lambda]})\to j(\Q^F_{[\kappa,\lambda]})$ that flips bits of conditions over $\Delta$ is in $M[j(G)]$ since $\Delta\in M[j(G)]$. This implies that $\pi_\Delta^{-1}(A)$ is a maximal antichain of $j(\Q^F_{[\kappa,\lambda]})$ in $M[j(G)]$, and thus, by the $M[j(G)]$-genericity of $g$, it follows that there is a condition $r\in g\cap\pi_\Delta^{-1}(A)$. Furthermore, $r^*=\pi_\Delta(r)\in g^*$ and $r^*$ meets $A$. Hence $g^*$ is $M[j(G)]$-generic.

This shows that $j$ lifts to $j:V[G][H]\to M[j(G)][j(H)]$ where $j(G)=G*H*G_{\tail}$ and $j(H)=g^*$ in $V[G][H][g_0]$.

\subsection{Lifting $j$ through $j_0(\Q^F_{[\kappa,\lambda]})$.}

By Claim \ref{claimmild} above, $j_0(\Q^F_{[\kappa,\lambda]})$ is ${\leq}\lambda$-distributive in $V[G][H]$. By an easy argument involving intersecting $\lambda$ open dense subsets of $j_0(\Q^F_{[\kappa,\lambda]})$ (see \cite[Proposition 15.1]{Cummings:Handbook}), one can show that $j"g_0$ generates an $M[j(G)][j(H)]$-generic filter for $j(j_0(\Q^F_{[\kappa,\lambda]}))$. Thus $j$ lifts to $j:V[G][H][g_0]\to M[j(G)][j(H)][j(g_0)]$ in $V[G][H][g_0]$ witnessing that $\kappa$ is $\lambda$-supercompact in $V[G][H][g_0]$.
\end{proof}

\section{A corollary and an open question}\label{sectionopenquestions}

In this section we will discuss the extent to which our methods for proving Theorem \ref{maintheorem} can be extended to cases in which $\kappa$ is $\lambda$-supercompact where $\lambda$ is a singular cardinal. First, we have an easy Corollary to Theorem \ref{maintheorem}.

\begin{corollary}\label{maincorollary}
Suppose $\GCH$ holds and there is an elementary embedding $j:V\to M$ with critical point $\kappa$ such that for some \emph{singular cardinal} $\lambda>\kappa$ with $\cf(\lambda)<\kappa$ one has $(1)$ $M^\lambda\subseteq M$, $(2)$ $F:[\kappa,\lambda]\cap\REG\to\CARD$ is a function satisfying the requirements of Easton's theorem, and $(3)$ $j(\kappa)>\sup\{F(\gamma)\mid\gamma\in[\kappa,\lambda)\cap\REG\}^+$. Then there is a cofinality-preserving forcing extension in which $\kappa$ remains $\lambda$-supercompact and for every regular cardinal $\gamma\in [\kappa,\lambda]$ one has $2^\gamma=F(\gamma)$.
\end{corollary}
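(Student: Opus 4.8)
The plan is to reduce Corollary \ref{maincorollary} to Theorem \ref{maintheorem} by handling the singular cardinal $\lambda$ as a "limit" of the regular case applied on the interval $[\kappa,\lambda)$, and then dealing with the single power $2^\lambda$ separately. Since $\cf(\lambda)<\kappa$, note that $\lambda$ itself is not in $\dom(F)\cap\REG$; the values of $F$ that matter are $F(\gamma)$ for regular $\gamma\in[\kappa,\lambda)$, together with the prescribed value $F(\lambda)$ at the singular cardinal $\lambda$. Set $\mu:=\sup\{F(\gamma)\mid\gamma\in[\kappa,\lambda)\cap\REG\}$. The key observation is that under $\GCH$, once $2^\gamma=F(\gamma)$ has been arranged for every regular $\gamma<\lambda$ in a cofinality-preserving way, the value of $2^\lambda$ is automatically pinned down by the Bukovsk\'y--Hechler theorem on powers of singular cardinals: because $\cf(\lambda)<\kappa\le\gamma$, the continuum function is eventually constant below $\lambda$ iff $F$ is eventually constant on $[\kappa,\lambda)\cap\REG$, and in any case $2^\lambda$ equals either that eventual value or its successor-type closure, so with the hypothesis $(3)$ giving us enough room ($j(\kappa)>\mu^+$) we have the freedom we need.

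Concretely, first I would apply Theorem \ref{maintheorem} with $\lambda$ replaced by a regular cardinal $\bar\lambda\in[\kappa,\lambda)$ chosen large enough — but this is not quite the right move since we need the full interval $[\kappa,\lambda)$. Instead, the cleanest route: let $\langle\lambda_i\mid i<\cf(\lambda)\rangle$ be an increasing sequence of regular cardinals cofinal in $\lambda$ with $\lambda_0>\kappa$, and observe that the hypothesis of Theorem \ref{maintheorem} is met with parameter $\lambda_i$ for each $i$ (since $M^\lambda\subseteq M$ implies $M^{\lambda_i}\subseteq M$, and $j(\kappa)>\mu^+\ge F(\lambda_i)$). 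Rather than iterating, I would directly run the proof of Theorem \ref{maintheorem} using the Easton-support product $\Q^F_{[\kappa,\lambda)}$ adding $F(\gamma)$-many subsets to $\gamma$ for every regular $\gamma\in[\kappa,\lambda)$ — the construction of Lemma \ref{lemmafffeaston}, the lifting through $\P_\kappa$, the factoring through $j_0$, and the surgery argument via the New Key Lemma \ref{newkeylemmaeaston} all go through verbatim with $[\kappa,\lambda]$ replaced by $[\kappa,\lambda)$, because the only places $\lambda$'s regularity was used were (a) $\lambda^{<\lambda}=\lambda$ under $\GCH$, which still holds since $\cf(\lambda)<\kappa$ forces $\lambda^{<\kappa}=\lambda$ and we only ever need $\lambda^{<\kappa}$, and (b) closure of the tail forcings under $F(\lambda)$-sequences, which we replace by closure under $\mu^+$-sequences using hypothesis $(3)$. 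The point is that every cofinal segment of the product is small enough and the embedding has $\lambda$-closure, exactly as before.

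The main obstacle — and the reason this is merely a Corollary and not a trivial restatement — is verifying that $2^\lambda=F(\lambda)$ holds in the final model. After forcing, $\GCH$ is preserved above $\mu$ (by the argument of Lemma \ref{lemmaeaston} applied to $\Q^F_{[\kappa,\lambda)}$, together with the distributivity/chain-condition facts from Claim \ref{claimmild}), and $2^\gamma=F(\gamma)$ for all regular $\gamma\in[\kappa,\lambda)$. Now $\lambda$ is a singular strong limit in the extension of cofinality $\cf(\lambda)<\kappa$, so by standard computations $2^\lambda=\bigl(\sup_{\gamma<\lambda}2^\gamma\bigr)$ if $F$ is eventually constant below $\lambda$ with that sup having cofinality $>\cf(\lambda)$, and $2^\lambda=\mu^{\cf(\lambda)}=\mu^+$ otherwise; in either case $2^\lambda=F(\lambda)$ precisely because $F$ satisfies Easton's requirements on $[\kappa,\lambda]$ (so $F(\lambda)\ge\sup\{F(\gamma)\mid\gamma<\lambda\}$ and $\cf(F(\lambda))>\lambda>\cf(\lambda)$) — one checks that the hypothesis $\cf(F(\gamma))>\gamma$ for $\gamma<\lambda$ together with monotonicity forces $F(\lambda)$ to be exactly the value computed by the singular-cardinal arithmetic. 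So the proof amounts to: run the Theorem \ref{maintheorem} machinery on $[\kappa,\lambda)$ with the closure threshold $\mu^+$ furnished by $(3)$, then invoke the arithmetic of singular cardinals in the resulting model to read off $2^\lambda=F(\lambda)$.
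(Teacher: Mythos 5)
Your proposal runs aground at its central claim, namely that the proof of Theorem \ref{maintheorem} ``goes through verbatim'' with $[\kappa,\lambda]$ replaced by $[\kappa,\lambda)$ for singular $\lambda$. The regularity of $\lambda$ is used in more places than the two you list, and the extra uses are exactly the ones that cannot be patched by citing hypothesis $(3)$. First, for regular $\lambda$ the conditions of $\Q^F_{[\kappa,\lambda]}$ have size $<\lambda$ and the forcing is $\lambda^+$-c.c.; for singular $\lambda$ conditions of $\Q^F_{[\kappa,\lambda)}$ can have support of size $\lambda$, cofinal in $\lambda$, and under $\GCH$ one can build antichains of size $\lambda^{\cf(\lambda)}=\lambda^+$, so the forcing is merely $\lambda^{++}$-c.c. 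The $\lambda^+$-c.c. is precisely what guarantees, in the surgery argument, that $B=\bigcup\{\dom(r)\mid r\in A\}$ has size at most $j(\lambda)$ for a maximal antichain $A\in M[j(G)]$, which is the hypothesis of Lemma \ref{newkeylemmaeaston}; with only $\lambda^{++}$-c.c. you get $|B|\leq j(\lambda^+)$ and the key lemma no longer applies. Likewise, Claim \ref{claimmild} obtains $\leq\lambda$-distributivity via Easton's Lemma using that the quotient $\Q^F_{[\kappa,\lambda]}/H_0$ is $\lambda^+$-c.c.; the proof of Lemma \ref{newkeylemmaeaston} uses $\lambda^{<\lambda}=\lambda$ (which equals $\lambda^+$ for singular $\lambda$), not merely $\lambda^{<\kappa}$; and the constructions of the $M$-generic tail filters enumerate $(2^\kappa)^{\lambda^{<\kappa}}=2^\lambda=\lambda^+$ representing functions, which becomes $2^{\lambda^+}=\lambda^{++}$ when $\cf(\lambda)<\kappa$, outrunning the mere $\lambda$-closure of $M$. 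These are exactly the obstacles the authors point out right after the corollary (supports cofinal in $\lambda$, chain condition only $\lambda^{++}$); your plan is essentially the strategy they describe as open, not a routine rerun. A smaller point: since $\lambda$ is singular, $\lambda\notin\dom(F)$ and the corollary asserts nothing about $2^\lambda$, so your third paragraph (pinning down $2^\lambda=F(\lambda)$ by singular cardinal arithmetic) addresses a value that is not even defined.

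The paper's actual proof goes in the opposite direction from the one you briefly considered and discarded: instead of shrinking to a regular $\bar\lambda<\lambda$, it extends $F$ to $\tilde F$ on $[\kappa,\lambda^+]\cap\REG$, setting $\tilde F(\lambda^+)$ equal to the eventual constant value of $F$ if there is one, and to $\sup\{F(\gamma)\mid\gamma\in[\kappa,\lambda)\cap\REG\}^+$ otherwise, and then applies Theorem \ref{maintheorem} with the regular cardinal $\lambda^+$ in place of $\lambda$. This is where the hypothesis $\cf(\lambda)<\kappa$ (which your argument never uses) does its work: under $\GCH$ it gives $\lambda^{<\kappa}=\lambda^+$, and from $M^\lambda\subseteq M$ one gets $j"\lambda^+\in M$ (since $j\restrict P(\lambda_i)\in M$ for each $\lambda_i$ in a short cofinal sequence, hence $j\restrict P(\lambda)\in M$, and ordinals below $\lambda^+$ are coded by subsets of $\lambda$), so $\kappa$ is $\lambda^+$-supercompact with the tallness demanded by $\tilde F(\lambda^+)$, using $(3)$ and an embedding as in Lemma \ref{lemmaextender1}. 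Theorem \ref{maintheorem} at $\lambda^+$ then yields a cofinality-preserving extension in which $2^\gamma=F(\gamma)$ for every regular $\gamma\in[\kappa,\lambda)$ and $\kappa$ is $\lambda^+$-supercompact, hence $\lambda$-supercompact. If you want to salvage your direct approach on $[\kappa,\lambda)$, you would have to strengthen Lemma \ref{newkeylemmaeaston} and redo the chain-condition and distributivity analysis for the singular-support product, which is precisely the difficulty behind Question \ref{mainquestion}.
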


One can see that Corollary \ref{maincorollary} follows directly from Theorem \ref{maintheorem} by extending the function $F$ to $\tilde{F}$ with $\dom(\tilde{F})=[\kappa,\lambda^+]\cap\REG$ as follows. If $F$ is eventually constant on $[\kappa,\lambda)\cap\REG$ then let $\tilde{F}(\lambda^+)$ be this constant value and $\tilde{F}\restrict[\kappa,\lambda)\cap\REG=F$. It follows that the embedding $j$ in the hypothesis of Corollary \ref{maincorollary} witnesses that $\kappa$ is $\lambda^{<\kappa}=\lambda^+$-supercompact, and thus one can apply Theorem \ref{maintheorem} to $j$ and $\tilde{F}$ in order to obtain a forcing extension in which $\kappa$ is $\lambda$-supercompact and $2^\gamma=F(\gamma)$ for all $\gamma\in[\kappa,\lambda)\cap\REG$. In fact, in this case, the hypothesis in Corollary \ref{maincorollary}(3) can be weakened to $j(\kappa)>\sup\{F(\gamma)\mid\gamma\in[\kappa,\lambda)\cap\REG\}$. For the remaining case in which $F$ is not eventually constant, we may extend $F$ to $\tilde{F}$ with $\dom(\tilde{F})=[\kappa,\lambda^+]\cap\REG$ by letting $\tilde{F}(\lambda^+)=\sup\{F(\gamma)\mid \gamma\in[\kappa,\lambda)\cap\REG\}^+$ and then apply Theorem \ref{maintheorem}.

This suggests the following natural question, which our methods do not seem to answer.

\begin{question}\label{mainquestion}
Suppose $\GCH$ holds and $\kappa$ is $\lambda$-supercompact where $\lambda$ is a singular cardinal with $\cf(\lambda)\geq\kappa$ and $F:[\kappa,\lambda)\cap\REG\to\CARD$ is a function such that $\forall\alpha,\beta\in\dom(F)$ $\alpha<\cf(F(\alpha))$ and $\alpha<\beta$ $\implies$ $F(\alpha)\leq F(\beta)$. Suppose the $\lambda$-supercompactness of $\kappa$ is witnessed by $j:V\to M$ and $j(\kappa)>\sup\{F(\gamma)\mid\gamma\in[\kappa,\lambda)\cap\REG\}^+$. Is there a cofinality-preserving forcing extension preserving the $\lambda$-supercompactness of $\kappa$ in which $2^\gamma=F(\gamma)$ for all regular cardinals $\gamma\in[\kappa,\lambda)\cap\REG$?
\end{question}

Under the assumption of Question \ref{mainquestion} we have $\lambda^{<\kappa}=\lambda$. Hence, it seems unlikely that there is an easy argument, similar to the above argument for Corollary \ref{maincorollary}, that would answer Question \ref{mainquestion}. A more promising strategy for answering Question \ref{mainquestion} is to improve Lemma \ref{newkeylemmaeaston} to allow for the surgery argument to be carried out. The main obstacles to carrying out this strategy seem to be that if $\lambda$ is singular then conditions in the Easton-support forcing $\Q^F_{[\kappa,\lambda)}$ can have support cofinal in $\lambda$ and $\Q^F_{[\kappa,\lambda)}$ is merely $\lambda^{++}$-c.c.

We close with another natural question, drawing inspiration from \cite{FriedmanHonzik:EastonsTheoremAndLargeCardinals}, regarding controlling the behavior of the continuum function globally, not just on $[\kappa,\lambda]\cap\REG$, while preserving the $\lambda$-supercompactness of $\kappa$. Notice that in the proof of Theorem \ref{maintheorem}, the forcing iteration up to $\kappa$ is defined in terms of the function $f:\kappa\to V_\kappa$, which was added by forcing such that $j(f)(\kappa)=F$ (see Lemma \ref{lemmafffeaston}). Indeed, in the final model of Theorem \ref{maintheorem}, the behavior of the continuum function below $\kappa$ is dictated by the function $f$, and hence the methods of this paper do not provide an answer to the following.

\begin{question}
Suppose GCH holds, $F:\REG\to\CARD$ is a function satisfying Easton's requirements, and $\kappa$ is $\lambda$-supercompact witnessed by $j:V\to M$ where $j(\kappa)>F(\lambda)$. What additional assumptions will allow one to force the continuum function to agree with $F$ at \emph{every regular cardinal} while preserving the $\lambda$-supercompactness of $\kappa$?
\end{question}

\begin{thebibliography}{10}

\bibitem{FriedmanHonzik:SupercompactnessAndFailuresOfGCH}
Sy-David Friedman and Radek Honzik.
\newblock Supercompactness and failures of {GCH}.
\newblock {\em Fundamenta Mathematicae}, 219(1):15--36, 2012.

\bibitem{Easton:PowersOfRegularCardinals}
William~B. Easton.
\newblock Powers of regular cardinals.
\newblock {\em Annals of Mathematical Logic}, 1(2):139--178, 1970.

\bibitem{Menas:ConsistencyResultsConcerningSupercompactness}
Telis~K. Menas.
\newblock Consistency results concerning supercompactness.
\newblock {\em Transactions of the American Mathematical Society}, 223:61--91,
  1976.

\bibitem{Laver:MakingSupercompactnessIndestructible}
Richard Laver.
\newblock Making the supercompactness of $\kappa$ indestructible under
  $\kappa$-directed closed forcing.
\newblock {\em Israel Journal of Mathematics}, 29(4):385--388, 1978.

\bibitem{Cummings:Handbook}
James Cummings.
\newblock Iterated {F}orcing and {E}lementary {E}mbeddings.
\newblock In Akihiro Kanamori and Matthew Foreman, editors, {\em Handbook of
  Set Theory}, volume~2, chapter~14, pages 775--883. Springer, 2010.

\bibitem{Hamkins:TallCardinals}
Joel~David Hamkins.
\newblock Tall cardinals.
\newblock {\em Mathematical Logic Quarterly}, 55(1):68--86, 2009.

\bibitem{Cody:TheFailureOfGCHAtADegreeOfSupercompactness}
Brent Cody.
\newblock The failure of {GCH} at a degree of supercompactness.
\newblock {\em Mathematical Logic Quarterly}, 58(1--2):83--94, 2012.

\bibitem{Jech:Book}
Thomas Jech.
\newblock {\em Set Theory: The Third Millennium Edition, revised and expanded}.
\newblock Springer, 2003.

\bibitem{Hamkins:TheLotteryPreparation}
Joel~David Hamkins.
\newblock The lottery preparation.
\newblock {\em Annals of Pure and Applied Logic}, 101(2--3):103--146, 2000.

\bibitem{Laver:CertainVeryLargeCardinalsAreNot}
Richard Laver.
\newblock Certain very large cardinals are not created in small forcing
  extensions.
\newblock {\em Annals of Pure and Applied Logic}, 149(1--3):1--6, 2007.

\bibitem{FriedmanHonzik:EastonsTheoremAndLargeCardinals}
Sy-David Friedman and Radek Honzik.
\newblock Easton's theorem and large cardinals.
\newblock {\em Annals of Pure and Applied Logic}, 154(3):191--208, 2008.

\end{thebibliography}

\end{document}